\setlist[itemize]{leftmargin=0.35in}
\newcommand{\Hn}[2]{
     \ifthenelse{\equal{#2}{1}}{H_{#1}}{H_{#1}^{\left(#2\right)}}
}
\newcommand{\QBinomial}[3]{\gkpSI{#1}{#2}_{#3}} 
\newcommand{\QPochhammer}[3]{\left(#1; #2\right)_{#3}}
\DeclareMathOperator{\ab}{ab}
\DeclareMathOperator{\ConvP}{P}
\DeclareMathOperator{\ConvQ}{Q}
\DeclareMathOperator{\IC}{IC}
\DeclareMathOperator{\RHSOp}{RHS}
\DeclareMathOperator{\LHSOp}{LHS}
\title{
       Continued Fractions for Square Series Generating Functions} 
\author{Maxie D. Schmidt \\ 
        \href{mailto:maxieds@gmail.com}{maxieds@gmail.com}
        } 
\address{University of Washington \\
        Department of Mathematics \\ 
        Padelford Hall \\ 
        Seattle, WA 98195 \\ 
        USA 
        } 
\date{Revised 2017.02.17-v2}
\begin{document}

\begin{abstract}
We consider new series expansions for 
variants of the so-termed ordinary geometric square series 
generating functions originally defined in the recent article titled 
``\emph{Square Series Generating Function Transformations}'' 
(arXiv: 1609.02803). 
Whereas the original square series transformations article 
adapts known generating function 
transformations to construct integral representations for these 
square series functions enumerating the square powers of $q^{n^2}$ for some 
fixed non-zero $q$ with $|q| < 1$, we study the 
expansions of these special series 
through power series generated by Jacobi-type continued fractions, or J-fractions. 
We prove new exact expansions of the $h^{th}$ convergents to these 
continued fraction series and show that the limiting case of these 
convergent generating functions exists as $h \rightarrow \infty$. 

We also prove new infinite $q$-series representations of special square series 
expansions involving square-power 
terms of the series parameter $q$, the $q$-Pochhammer symbol, and 
double sums over the $q$-binomial coefficients. 
Applications of the new results we prove within the article include new 
$q$-series representations for the ordinary generating functions of the 
special sequences, $r_p(n)$, and $\sigma_1(n)$, as well as parallels to the 
examples of the new integral representations for theta functions, 
series expansions of infinite products and partition function 
generating functions, and related unilateral special function series 
cited in the first square series transformations article. 

\bigskip\noindent 
\underline{Keywords}: 
          \emph{square series; $q$-series; J-fraction; continued fraction; 
                sum of squares functions; sum of divisors function; 
                theta function; ordinary generating function. } 

\bigskip\noindent
\underline{MSC Subject Codes}: \emph{05A15; 11Y65; 11B65; 40A15.} 

\end{abstract} 

\maketitle

\section{Introduction} 

\subsection{Continued Fraction Expansions of Ordinary Generating Functions} 

\emph{Jacobi-type continued fractions}, or \emph{J-fractions}, 
correspond to power series defined by 
infinite continued fraction expansions of the form 
\begin{align} 
\label{eqn_J-Fraction_Expansions} 
J_{\infty}\left(\{c_i\}, \{\ab_i\}; z\right) 
     & = 
     \cfrac{1}{1-c_1z-\cfrac{\ab_2 z^2}{1-c_2z- 
     \cfrac{\ab_3 z^2}{\cdots}}} \\ 
\notag 
     & = 
     1 + c_1 z + \left(\ab_2+c_1^2\right) z^2 + 
     \left(2\ab_2 c_1+c_1^3+\ab_2 c_2\right) z^3 + 
     \cdots, 
\end{align} 
for some sequences $\{ c_i \}_{i=1}^{\infty}$ and 
$\{ \ab_i \}_{i=2}^{\infty}$, and 
some (typically formal) series variable $z \in \mathbb{C}$ 
\citep[\cf \S 3.10]{NISTHB} \citep{WALL-CFRACS}. 
The formal series enumerated by special cases of the truncated and infinite 
J-fraction series of this form include \emph{ordinary} 
(as opposed to typically closed-form \emph{exponential}) 
\emph{generating functions} 
for many one and two-index combinatorial sequences including the 
generalized factorial functions studied in the references 
and in the results from Flajolet's articles. 
The references provide infinite formal continued fraction 
series for the typically divergent ordinary generating functions of many 
special sequences, factorial functions, and combinatorial triangles 
arising in applications 
\citep{FLAJOLET80B,FLAJOLET82,GFLECT,MULTIFACT-CFRACS}. 

\subsection{J-Fraction Expansions of Ordinary Square Series Generating Functions} 

In this article, we consider a $q$-series-related variant of the 
J-fractions in \eqref{eqn_J-Fraction_Expansions} 
enumerating the square-power terms of $q^{n^2}$, 
which we have discovered computationally 
(see \citep{SQSERIESMDS} for integral representations). 
In particular, for non-zero $q, z \in \mathbb{C}$ such that $|q|, |z| < 1$, 
we define the following Jacobi-type J-fraction, $J_{\infty}^{[\sq]}(q, z)$, 
for the ordinary square series generating function defined in 
\citep{SQSERIESMDS} as follows: 
\begin{align} 
\label{eqn_SquareSeries_J-Fraction_Expansions} 
J_{\infty}^{[\sq]}(q, z) & = 
     \cfrac{1}{1-qz-\cfrac{q^2(q^2-1) z^2}{1-q(q^4+q^2-1)z- 
     \cfrac{q^8 (q^4-1) z^2}{1-q^3(q^6+q^4-1)z-\cfrac{q^{14}(q^6-1) z^2}{\cdots.}}}} 
\end{align} 

\begin{definition}[Component Sequences and Convergent Functions]  
\label{def_ciq_abiq_Phqz_Qhqz}
We define the component sequence functions, 
$c_h \equiv c_h^{[\sq]}(q)$ and $\ab_h \equiv \ab_h^{[\sq]}(q)$, as follows 
for $h \geq 0$: 
\begin{align} 
\label{eqn_chq_subseq_def} 
c_h^{[\sq]}(q) & := 
     \begin{cases} 
          q^{2h-3} \cdot \left(q^{2h} + q^{2h-2} - 1\right) & 
          \text{ if $h \geq 2$; } \\ 
          q & \text{ if $h = 1$; } \\ 
          1 & \text{ if $h = 0$, } 
     \end{cases} \\ 
\label{eqn_abhq_subseq_def} 
\ab_h^{[\sq]}(q) & := 
     \begin{cases} 
          q^{6h-10} \cdot \left(q^{2h-2} - 1\right) & 
          \text{ if $h \geq 2$; } \\ 
          0 & \text{ otherwise. }
     \end{cases} 
\end{align} 
We then define the $h^{th}$ convergent functions, 
$\Conv_h(q, z) := \ConvP_h(q, z) / \ConvQ_h(q, z)$, recursively through the 
component numerator and denominator functions given by\footnote{ 
     \underline{Notation}: 
     \emph{Iverson's convention} compactly specifies 
     boolean-valued conditions and is equivalent to the 
     \emph{Kronecker delta function}, $\delta_{i,j}$, as 
     $\Iverson{n = k} \equiv \delta_{n,k}$. 
     Similarly, $\Iverson{\mathtt{cond = True}} \equiv 
     \delta_{\mathtt{cond}, \mathtt{True}}$ in the remainder of the article. 
} 
\begin{align} 
\label{eqn_ConvFn_PhzQhz_rdefs} 
\ConvP_h(q, z) & = (1-c_{h}^{[\sq]}(q) \cdot z) \ConvP_{h-1}(q, z) - 
     \ab_{h}^{[\sq]}(q) \cdot z^2 \ConvP_{h-2}(q, z) + \Iverson{h = 1} \\ 
\notag 
\ConvQ_h(q, z) & = (1-c_{h}^{[\sq]}(q) \cdot z) \ConvQ_{h-1}(q, z) - 
     \ab_{h}^{[\sq]}(q) \cdot z^2 \ConvQ_{h-2}(q, z) + 
     (1-c_{1}^{[\sq]}(q) \cdot z) \Iverson{h = 1} + \Iverson{h = 0}. 
\end{align} 
\end{definition} 
We claim, and subsequently prove in Section \ref{Section_Proofs}, 
that the truncated $(h+1)$-order power series expansions of the $h^{th}$ 
convergent functions, $\Conv_h(q, z)$, defined in 
Definition \ref{def_ciq_abiq_Phqz_Qhqz} generate the so-termed 
``\emph{square series}'' powers defined in \citep{SQSERIESMDS} as 
\begin{align*} 
\Conv_h(q, z) & = 
     \cfrac{1}{1-qz-\cfrac{q^2(q^2-1) z^2}{1-q(q^4+q^2-1)z- 
     \cfrac{q^8 (q^4-1) z^2}{1-q^3(q^6+q^4-1)z-\cfrac{q^{14}(q^6-1) z^2}{ 
     \cfrac{\cdots}{1-q^{2h-3}(q^{2h}+q^{2h-2}-1) z}}}}} \\ 
     & = 
     1 + q z + q^4 z^2 + q^9 z^3 + \cdots + q^{h^2} z^h + 
     \sum_{n=h+1}^{\infty} \widetilde{e}_{h,n}(q) \cdot z^n, 
\end{align*} 
for all $n, h \in \mathbb{Z}^{+}$. 
More precisely, we state the next result as our main theorem which 
we carefully prove in 
Section \ref{subSection_ProofOfMainTheorem} of the article below. 

\begin{theorem}[J-Fraction Expansions of Ordinary Square Series Generating Functions] 
\label{theorem_main_theorem} 
For all $h \geq 1$ and every $0 \leq n \leq h$, we have that 
$[z^n] \Conv_h(q, z) = q^{n^2}$. 
\end{theorem}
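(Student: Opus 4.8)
The plan is to recast the coefficientwise claim as a single order-of-vanishing statement for an auxiliary polynomial. Let $S(q,z) := \sum_{n=0}^{\infty} q^{n^2} z^n$ denote the full formal square series, and define
\[ R_h(q,z) := \ConvP_h(q,z) - S(q,z)\,\ConvQ_h(q,z). \]
A short induction on \eqref{eqn_ConvFn_PhzQhz_rdefs} shows that $\ConvQ_h(q,0)=1$ for every $h$ (each step multiplies the previous constant term by $[z^0](1-c_h z) = 1$, and the $\ab_h z^2$ term contributes nothing at order $z^0$), so $\ConvQ_h$ is a unit in $\mathbb{C}[[z]]$ and $\Conv_h - S = R_h / \ConvQ_h$ vanishes to exactly the same order at $z=0$ as $R_h$. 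Consequently Theorem \ref{theorem_main_theorem} is equivalent to the assertion that $R_h(q,z) = O(z^{h+1})$ for all $h \geq 1$, and I would prove this reformulation.

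First I would eliminate the inhomogeneous Iverson terms. Subtracting $S$ times the denominator recurrence from the numerator recurrence in \eqref{eqn_ConvFn_PhzQhz_rdefs}, the boundary corrections vanish for $h \geq 2$ and one is left with the clean homogeneous recurrence
\[ R_h = (1 - c_h z)\,R_{h-1} - \ab_h z^2\, R_{h-2}, \qquad h \geq 2, \]
seeded by $R_0 = -S$ and $R_1 = 1 - (1-qz)S = (q^2-q^4)z^2 + O(z^3)$. Substituting the hypotheses $R_{h-1} = O(z^h)$ and $R_{h-2} = O(z^{h-1})$ into this recurrence gives only the crude bound $R_h = O(z^h)$: the summand $\ab_h z^2 R_{h-2}$ is already $O(z^{h+1})$, but the leading $1$ in the factor $(1-c_h z)$ propagates $R_{h-1}$ at order $z^h$ unchanged.

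The hard part is thus the \emph{diagonal} coefficient $[z^h]R_h$, where the crude estimate falls exactly one order short and where the value $q^{h^2}$ is actually pinned down. Its vanishing is a cancellation forced by the specific shapes $c_h = q^{2h-3}(q^{2h}+q^{2h-2}-1)$ and $\ab_h = q^{6h-10}(q^{2h-2}-1)$. To harness it I would strengthen the induction hypothesis to record the exact leading data of each $R_k$; the computations above point to the sharp statement $R_h = O(z^{2h})$ with leading coefficient $[z^{2h}]R_h = -\prod_{j=2}^{h+1}\ab_j$. Granting this shape, the two estimates $[z^{2h-2}]R_h = 0$ and $[z^{2h-1}]R_h = 0$ that upgrade the crude $O(z^{2h-2})$ bound to $O(z^{2h})$ reduce, through the recurrence, to the multiplicative relation $[z^{2h-2}]R_{h-1} = \ab_h\,[z^{2h-4}]R_{h-2}$ together with a companion identity coupling the next coefficients through $c_h$. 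Both are routine $q$-algebra once the closed form of the leading coefficient is in hand, and they are precisely where the engineered forms of $c_h$ and $\ab_h$ are used. Since $2h \geq h+1$ for $h \geq 1$, establishing $R_h = O(z^{2h})$ yields the required $R_h = O(z^{h+1})$ and hence the theorem.

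As an independent consistency check I would also record the determinant identity $\ConvP_h \ConvQ_{h-1} - \ConvP_{h-1}\ConvQ_h = z^{2(h-1)} \prod_{j=2}^{h} \ab_j$, obtained by the usual telescoping induction on \eqref{eqn_ConvFn_PhzQhz_rdefs}. Dividing by the unit $\ConvQ_h \ConvQ_{h-1}$ gives $\Conv_h - \Conv_{h-1} = O(z^{2h-2})$, so the coefficients $[z^n]\Conv_h$ stabilize in $h$ and the infinite J-fraction \eqref{eqn_SquareSeries_J-Fraction_Expansions} is well defined; this localizes the problem to identifying the stabilized value, which is exactly what the $R_h$ induction supplies.
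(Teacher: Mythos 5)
There is a genuine gap, and it sits exactly where you dismiss the remaining work as ``routine $q$-algebra once the closed form of the leading coefficient is in hand.'' Your reduction to $R_h := \ConvP_h - S\,\ConvQ_h$, the homogeneous recurrence $R_h = (1-c_h z)R_{h-1} - \ab_h z^2 R_{h-2}$ for $h \geq 2$, the diagnosis that the crude induction loses exactly one order at the diagonal, and the determinant/stabilization identity (which the paper records in Remark \ref{remark_Convhqz_recs_and_finite_sums}) are all correct. The problem is that your strengthened hypothesis --- $R_k = O(z^{2k})$ together with the leading coefficient $[z^{2k}]R_k = -\prod_{j=2}^{k+1}\ab_j$ --- is not inductively self-sustaining. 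Of the two coefficients you must kill to pass from $O(z^{2h-2})$ to $O(z^{2h})$, only the first closes under your hypothesis:
\begin{align*}
[z^{2h-2}]R_h &= [z^{2h-2}]R_{h-1} - \ab_h\,[z^{2h-4}]R_{h-2}
\end{align*}
indeed reduces to the telescoping relation between leading coefficients. But the second,
\begin{align*}
[z^{2h-1}]R_h &= [z^{2h-1}]R_{h-1} - c_h\,[z^{2h-2}]R_{h-1} - \ab_h\,[z^{2h-3}]R_{h-2},
\end{align*}
involves $[z^{2h-1}]R_{h-1}$ and $[z^{2h-3}]R_{h-2}$, the coefficients \emph{one past the leading order} of the two previous remainders, and these are simply not determined by the leading-coefficient data you carry. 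They are genuinely new quantities: one can check directly that $[z^{3}]R_1 = q^5 - q^9 = -\ab_2\cdot q^{3}\left(1+q^{2}\right)$, and in general $[z^{2k+1}]R_k = -\prod_{j=2}^{k+1}\ab_j \cdot q^{2k+1}\left(1 + q^2 + \cdots + q^{2k}\right)$, a formula that already encodes the linear coefficient of the denominator polynomial $\ConvQ_{k+1}$ --- information absent from your hypothesis.

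Nor can this be patched by adding the subleading coefficient to the induction: the recurrence couples the coefficient of $R_h$ at depth $d$ past its leading order to coefficients of $R_{h-1}$ and $R_{h-2}$ at depth $d+2$ past theirs, so \emph{no} hypothesis tracking a bounded number of coefficients is closed under the inductive step. Carried to completion, your program forces closed forms for \emph{all} coefficients of $R_h$, which (given $S$) is equivalent to exact coefficient formulas for $\ConvP_h$ and $\ConvQ_h$ themselves. That is precisely what the paper does and where its real work lies: Proposition \ref{prop_DenomConvFn_Seq_Formula} and Proposition \ref{prop_NumConvFn_Seq_Formula} establish the exact $q$-binomial expansions of $\ConvQ_h$ and $\ConvP_h$ by induction on the $q$-Pascal recurrences \eqref{eqn_QBinomialCoeff_triangular_recs}, and the theorem then follows by coefficient extraction from the rational function together with the $q$-binomial theorem. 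Your reformulation via $R_h$ is a sound and even elegant reduction, and your claimed leading-coefficient formula is correct, but the engine that pins the coefficients down is missing, and supplying it lands you back at the paper's two propositions.
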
 

For $0 < |q|, |z| < 1$, we show that the 
limiting convergent function, denoted by 
\begin{align*} 
\tag{Ordinary Geometric Square Series} 
J_{\infty}^{[\sq]}(q, z) 
     := \sum_{n \geq 0} q^{n^2} z^n = 
     \lim_{h \longrightarrow \infty}\ \Conv_h(q, z), 
\end{align*} 
which corresponds to the integral representations of the 
\emph{ordinary geometric square series} cases defined in 
\citep{SQSERIESMDS} given by 
\begin{align*} 
\tag{Ordinary Geometric Square Series Integral Representation} 
J_{\infty}^{[\sq]}(q, z) & = \int_0^{\infty} 
     \frac{2 e^{-t^2/2}}{\sqrt{2\pi}} \left[ 
     \frac{1 - z \cosh\left(t \sqrt{2 \Log(q)}\right)}{ 
     z^2 - 2z \cosh\left(t \sqrt{2 \Log(q)}\right) + 1} 
     \right] dt, 
\end{align*} 
exists, and that the limiting cases of this convergent function series 
converge uniformly as a function of $z$ with respect to the sequence of $h^{th}$ 
partial sums of \eqref{eqn_Intro_GeomSqSeries_NewSeriesExp} defined below. 

Thus we are able to prove new infinite $q$-series identities generating 
special cases of $J_{\infty}^{[\sq]}(q, c \cdot q^{p})$ in 
Section \ref{Section_Proofs}, 
which arise in many applications to theta functions and 
special function series. 
The primary new form of the infinite q-series expansions we prove in 
Section \ref{Section_Proofs} of the article has the form 
\begin{align} 
\label{eqn_Intro_GeomSqSeries_NewSeriesExp}
J_{\infty}^{[\sq]}(q, z) & = \sum_{i=1}^{\infty} 
     \frac{(-1)^{i-1} q^{(3i-4)(i-1)} \QPochhammer{q^2}{q^2}{i-1} z^{2i-2}}{ 
     \sum\limits_{0 \leq j \leq n < 2i} 
     \QBinomial{i}{j}{q^2} \QBinomial{i-1}{n-j}{q^2} q^{2j} \cdot 
     \left(-q^{2i-3} \cdot z\right)^n}. 
\end{align} 
Moreover, the $h^{th}$ partial sums of the infinite $q$-series expansion in 
\eqref{eqn_Intro_GeomSqSeries_NewSeriesExp} corresponding to the $h^{th}$ 
convergent function, $\Conv_h(q, z)$, is always at least $(h+1)$-order accurate in 
generating the coefficients, $q^{n^2}$, of $z^n$ for $0 \leq n \leq h$. 
In addition, we show that we can 
differentiate these new infinite square series expansions termwise with 
respect to $z$ thus allowing us to generate series for 
higher-order derivatives of the \emph{Jacobi theta functions}, 
which are generated as particular cases of the geometric square series, 
$\vartheta_{0,m}(a, b; q, z) := \sum_{n} (an+b)^m q^{n^2} z^n$, 
in \citep{SQSERIESMDS} and which similarly arise in many 
well-known identities and classical expansions of special products and 
partition function generating functions. 

\begin{table}[ht]
\centering 
\begin{tabular}{|c|l|} \hline 
$h$ & $\ConvP_h(q, z)$ \\ \hline 
1 & $1$ \\
2 & $1-q \left(q^4+q^2-1\right) z$ \\
3 & $1-q \left(1+q^2\right) \left(-1+q^2+q^6\right) z+q^4 \left(1-q^2-q^4+q^8+q^{10}\right) z^2$ \\ 
4 & $1-q \left(1-q+q^2\right) \left(1+q+q^2\right) \left(-1+q^2+q^8\right) z+q^4 \left(1-q^4-q^6-q^8+q^{12}+2 q^{14}+q^{16}+q^{18}\right) z^2$ \\ 
  & $\phantom{1}-q^9 \left(-1+q^2+q^4-2 q^{10}+q^{16}+q^{18}\right) z^3$ \\ 
5 & $1-q \left(1+q^2\right) \left(1+q^4\right) \left(1-q^2+q^4\right) \left(-1+q^4+q^6\right) z$ \\ 
  & $\phantom{1}+q^4 \left(1+q^2\right) \left(1-q+q^2\right) \left(1+q+q^2\right) \left(1-q^2+q^4\right) \left(1-q^2-q^6+q^{12}+q^{16}\right) z^2$ \\ 
  & $\phantom{1}-q^9 \left(1+q^2\right) \left(-1+q^2+q^6-2 q^{14}-q^{18}+q^{20}+2 q^{24}+q^{28}\right) z^3$ \\ 
  & $\phantom{1}+q^{16} \left(1-q^2-q^4+q^{10}+q^{12}+q^{14}-q^{16}-q^{18}-q^{20}+q^{26}+q^{28}\right) z^4$ \\ 
\hline 
\end{tabular} 

\bigskip 
\caption{The Numerator Convergent Functions, $\ConvP_h(q, z)$} 
\label{table_ConvFnPhz_spcases} 

\end{table} 

\begin{table}[ht]
\centering 
\begin{tabular}{|c|l|} \hline 
$h$ & $\ConvQ_h(q, z)$ \\ \hline 
0 & $1$ \\ 
1 & $1 - q z$ \\
2 & $1 - q^3 (1 + q^2) z + q^6 z^2$ \\
3 & $1 - q^5 (1 - q + q^2) (1 + q + q^2) z + 
     q^{10} (1 - q + q^2) (1 + q + q^2) z^2 - q^{15} z^3$ \\ 
4 & $1 - q^7 (1 + q^2) (1 + q^4) z + 
     q^{14} (1 - q + q^2) (1 + q + q^2) (1 + q^4) z^2$ \\ 
  & $\phantom{1}- 
     q^{21} (1 + q^2) (1 + q^4) z^3 + q^{28} z^4$ \\ 
5 & $1 - q^9 (1 - q + q^2 - q^3 + q^4) (1 + q + q^2 + q^3 + q^4) z$ \\ 
  & $\phantom{1}+ 
     q^{18} (1 + q^4) (1 - q + q^2 - q^3 + q^4) (1 + q + q^2 + q^3 + q^4) z^2$ \\ 
  & $\phantom{1}- 
     q^{27} (1 + q^4) (1 - q + q^2 - q^3 + q^4) (1 + q + q^2 + q^3 + 
     q^4) z^3$ \\ 
  & $\phantom{1}+ 
     q^{36} (1 - q + q^2 - q^3 + q^4) (1 + q + q^2 + q^3 + q^4) z^4 - 
     q^{45} z^5$ \\ 
\hline 
\end{tabular} 

\bigskip 
\caption{The Denominator Convergent Functions, $\ConvQ_h(q, z)$} 
\label{table_ConvFnQhz_spcases} 

\end{table} 

Tables of the expansions of the component convergent functions, 
$\ConvP_h(q, z)$ and $\ConvQ_h(q, z)$, for the first few special cases of 
$h \geq 1$ are given in 
Table \ref{table_ConvFnPhz_spcases} and 
Table \ref{table_ConvFnQhz_spcases}. 
Proposition \ref{prop_DenomConvFn_Seq_Formula} and 
Proposition \ref{prop_NumConvFn_Seq_Formula} 
stated in Section \ref{Section_Proofs} of the article below 
provide more general exact formulas satisfied by these 
convergent function sequences for all $h \geq 1$. 
Since the \emph{sums of squares functions}, $r_p(n)$, are generated by the 
$p^{th}$ powers of the \emph{Jacobi theta function}, 
$\vartheta_3(q) = 1 + 2 q + 2 q^4 + 2 q^9 + \cdots$, 
we are particularly interested in the properties of the convergent function 
special cases, $\widetilde{\vartheta}_{3,h}(q) := 1 + 2q \cdot \Conv_h(q, q^2)$, 
in the context of the applications briefly motivated in the next subsection 
(see Table \ref{table_ConvFnConvhq2z_spcases} on page 
\pageref{table_ConvFnConvhq2z_spcases}). 

\subsection{Generating the Sums of Squares Functions} 

In Section \ref{Section_Applications}, 
we use the new square series J-fraction representation for 
$\widetilde{\vartheta}_{3,h}(q)$, which we will prove in the next section, 
to expand new forms of 
congruences and generating functions for the 
\emph{sums of squares functions}, $r_2(n)$, and more generally for 
$r_p(n)$ when $p \geq 3$. 
For a fixed integer $p \geq 2$, the function, $r_p(n)$, is defined to be the 
number of solutions to the equation 
\begin{equation*} 
n = x_1^2 + x_2^2 + \cdots + x_p^2, 
\end{equation*} 
where the integers $x_i$ are taken to be positive, negative, or zero-valued. 
For each $p \geq 2$, these sequences are generated over $q$ by the following 
$p^{th}$ powers of the \emph{Jacobi theta function}, $\vartheta_3(q)$ 
\citep{NISTHB,HARDYANDWRIGHT}: 
\begin{align*} 
\sum_{n \geq 0} r_p(n) q^n & = \vartheta_3(q)^{p} = \left(
     1 + 2q \times \sum_{n=0}^{\infty} q^{n(n+2)}\right)^{p}. 
\end{align*} 
If we let $\Conv_h(q, z) := P_h(q, z) / Q_h(q, z)$ denote the $h^{th}$ 
convergent function to our new J-fraction expansion, as in 
Definition \ref{def_ciq_abiq_Phqz_Qhqz}, and let 
$E_h(z^m) := z^m \cdot \Iverson{0 \leq m \leq h}$ denote the 
\emph{erasing operator}, or $(h+1)$-order power series truncation operator in $z$, 
we can generate the sums of squares functions by 
\begin{align*} 
\sum_{0 \leq n \leq h} q^{n^2} \left(q^{2} z\right)^n & = 
     E_h\left[\left(\frac{Q_h(q, q^2 z) + 2q P_h(q, q^2 z)}{ 
     Q_h(q, q^2 z)}\right)^{p} \right]. 
\end{align*} 
In Section \ref{subSection_rpn_NewOGF_reprs}, 
we give a new explicit formula for the generating functions 
of $r_p(n)$ for all $p \geq 2$ in the form of 
\begin{align*}
r_p(n) & = [q^n] \left(1 + 2q \times \sum_{i=1}^{\infty} 
     \frac{(-1)^{i-1} q^{3i(i-1)} \QPochhammer{q^2}{q^2}{i-1}}{ 
     \sum\limits_{0 \leq n < 2i} \left(\sum\limits_{0 \leq j \leq n} 
     \QBinomial{i}{j}{q^2} 
     \QBinomial{i-1}{n-j}{q^2} q^{2j} \right) (-q^{2i-1})^n} 
     \right)^{p}. 
\end{align*}
Other corollaries of the results we prove in 
Section \ref{Section_Proofs} are given in 
Section \ref{Section_Congruences_GFs}, including applications to new 
congruences for the sums of squares functions, as well as a 
new approach to generating the 
odd-indexed \emph{sum of divisors function}, 
$r_4(2k+1) \equiv \sigma_1(2k+1)$, 
through our new modified $q$-series generating function constructions. 

We will also show in Section \ref{Section_Applications} that we can 
expand the generating function for the complete sequence of $\sigma_1(n)$, 
which is well-known to satisfy 
$\sigma_1(n) = -[q^{n-1}] d / dq\left[\Log \QPochhammer{q}{q}{\infty}\right]$, 
using our new results applied to the series for the infinite 
\emph{$q$-Pochhammer symbol} given by 
\begin{align*} 
\QPochhammer{q}{q}{\infty} & = 
     1 - q \times \sum_{i=1}^{\infty} 
     \frac{(-1)^{i-1} q^{(9i-2)(i-1)/2} \QPochhammer{q^3}{q^3}{i-1}}{ 
     \sum\limits_{0 \leq j \leq n < 2i} \QBinomial{i}{j}{q^3} 
     \QBinomial{i-1}{n-j}{q^3} q^{3j} \cdot q^{(3i-2) n}} \\ 
     & \phantom{=1\ } - 
     q^2 \times \sum_{i=1}^{\infty} 
     \frac{(-1)^{i-1} q^{(9i+2)(i-1)/2} \QPochhammer{q^3}{q^3}{i-1}}{ 
     \sum\limits_{0 \leq j \leq n < 2i} \QBinomial{i}{j}{q^3} 
     \QBinomial{i-1}{n-j}{q^3} q^{3j} \cdot q^{(3i-1) n}}. 
\end{align*} 
Another $q$-series expansion generating the \emph{partition function}, $p(n)$, 
is similarly expanded through our new results as the reciprocal of the 
series in the previous equation. 

\subsection{Other Applications of the New Results Proved in the Article} 

The results of the applications stated in 
Section \ref{Section_Applications} 
provide new series representations for 
generating functions of special sequences, including the 
\emph{sums of squares functions}, $r_p(n)$, and the 
\emph{sum of divisors} (\emph{divisor sigma}) function, $\sigma_1(n)$, 
as well as new infinite series for the (higher-order) 
Jacobi theta functions, and other special case series results 
such as for the unilateral series expansion of the 
generalized \emph{two-variable Ramanujan theta function}, $f(a, b)$, 
expanded by both its integral representation proved in \citep{SQSERIESMDS} 
and our new series result in \eqref{eqn_Intro_GeomSqSeries_NewSeriesExp} as 
\begin{align*} 
f(a, b) & = 
     1 + \int_0^{\infty} \frac{2a e^{-t^2/2}}{\sqrt{2\pi}}\left[ 
     \frac{1 - a \sqrt{ab} \cosh\left(\sqrt{\Log(ab)} t\right)}{ 
     a^3 b - 2a \sqrt{ab} \cosh\left(\sqrt{\Log(ab)} t\right) + 1} 
     \right] dt \\ 
\notag 
   & \phantom{=\ 1 } + 
     \int_0^{\infty} \frac{2b e^{-t^2/2}}{\sqrt{2\pi}}\left[ 
     \frac{1 - b \sqrt{ab} \cosh\left(\sqrt{\Log(ab)} t\right)}{ 
     a b^3 - 2b \sqrt{ab} \cosh\left(\sqrt{\Log(ab)} t\right) + 1} 
     \right] dt \\ 
     & = 
     1 + \sum_{c \in \{a, b\}} \sum_{i=1}^{\infty} 
     \frac{c \cdot (-1)^{i-1} (ab)^{(3i-2)(i-1)} 
     \QPochhammer{ab}{ab}{i-1} c^{2i-2}}{\sum\limits_{0 \leq j \leq n < 2i} 
     \QBinomial{i}{j}{ab} \QBinomial{i-1}{n-j}{ab} (ab)^j \cdot 
     \left(-(ab)^{i-1} \cdot c\right)^{n}}. 
\end{align*} 
We also prove new convergent-function-based infinite $q$-series 
expansions of the Jacobi theta functions, 
$\vartheta_i(q, z)$, for $i := 1,2,3,4$ 
in Section \ref{subSection_JThetaFns_Newq-Series_reprs}. 
For example, the next special constant values of 
\emph{Ramanujan's $\varphi$-function and $\psi$-function} cited as 
applications in \citep{SQSERIESMDS}, are expanded by the following 
new series and corresponding known integral representations from the 
reference: 
\begin{align} 
\label{eqn_PairOfExamples_ExplicitSpConstantValues_JThetaFnSpCases} 
\vartheta_3\left(0, e^{-5\pi}\right) & \equiv 
\frac{\pi^{1/4}}{\Gamma\left(\frac{3}{4}\right)} \cdot 
     \frac{\sqrt{5 + 2 \sqrt{5}}}{5^{3/4}} \\ 
\notag 
     & = 
     1 + \int_0^{\infty} \frac{e^{-t^2/2}}{\sqrt{2\pi}} \left[ 
     \frac{4 e^{5\pi} \left(e^{10\pi} - \cos\left(\sqrt{10 \pi} t\right) 
     \right)}{e^{20\pi} - 2 e^{10\pi} \cos\left(\sqrt{10 \pi} t\right) + 1} 
     \right] dt \\ 
\notag 
     & = 
     1 + 2 e^{-5 \pi} \times \sum_{i=1}^{\infty} 
     \frac{(-1)^{i-1} e^{-15\pi i(i-1)} 
     \QPochhammer{e^{-10\pi}}{e^{-10\pi}}{i-1}}{ 
     \sum\limits_{0 \leq j \leq n < 2i} 
     \QBinomial{i}{j}{e^{-10\pi}} \QBinomial{i-1}{n-j}{e^{-10\pi}} \times 
     e^{-10\pi \cdot j} \cdot \left(-e^{-5\pi (2i-1)}\right)^{n}} \\ 
\notag 
\frac{e^{\pi / 8}}{2} \cdot \vartheta_2\left(0, e^{-\pi / 2}\right) & \equiv 
     \frac{\pi^{1/4}}{\Gamma\left(\frac{3}{4}\right)} \cdot 
     \frac{\left(\sqrt{2} + 1\right)^{1/4} e^{\pi / 16}}{2^{7/16}} \\ 
\notag 
     & = 
     \int_0^{\infty} \frac{e^{-t^2/2}}{\sqrt{2\pi}} \left[ 
     \frac{\cos\left(\sqrt{\frac{\pi}{2}} t\right) - e^{\pi / 4}}{ 
     \cos\left(\sqrt{\frac{\pi}{2}} t\right) - \cosh\left(\frac{\pi}{4}\right)} 
     \right] dt \\ 
\notag 
     & = 
     \sum_{i=1}^{\infty} \frac{(-1)^{i-1} e^{-\pi (3i-2)(i-1) / 2} 
     \QPochhammer{e^{-\pi}}{e^{-\pi}}{i-1}}{\sum\limits_{0 \leq j \leq n < 2i} 
     \QBinomial{i}{j}{e^{-\pi}} \QBinomial{i-1}{n-j}{e^{-\pi}} \times 
     e^{-\pi \cdot j} \cdot \left(-e^{-\pi(i-1)}\right)^{n}}. 
\end{align} 
We cite other special case square series variants treated as 
notable examples in \citep{SQSERIESMDS} as applications of 
Proposition \ref{prop_InfiniteGeomSqSeries_qSeries_repr} and 
Corollary \ref{cor_InfiniteGeomSqSeries_qSeries_repr_anpb_derivs} 
in Section \ref{Section_Applications} of the article below. 

With the exception of the first examples of the new forms of the 
$q$-series generating functions enumerating the sequences, $r_p(n)$ and 
$\sigma_1(n)$, established in Section \ref{subSection_rpn_NewOGF_reprs}, 
we mostly follow the examples cited as noteworthy applications of the 
new results for square series integral representations 
proved in \citep{SQSERIESMDS} in Section \ref{Section_Applications}. 
Further special cases of a broader set of results generalizing 
\eqref{eqn_PairOfExamples_ExplicitSpConstantValues_JThetaFnSpCases} 
in terms of the Jacobi theta functions lead to other 
corollaries providing new series expansions of special products, 
new series representations of explicit constants involving real-valued 
multiples of reciprocals of the rational gamma function cases, 
$\Gamma\left(\frac{1}{4}\right)$, $\Gamma\left(\frac{1}{3}\right)$, and 
$\Gamma\left(\frac{3}{4}\right)$, and 
integral-based representations of Mellin transforms defining 
known multiples of the \emph{Riemann zeta function}, $\zeta(s)$, when 
$\Re(s) > 2$. 

\subsection{Significance of the New Results in the Article} 

The new $q$-series expansions of special sequence generating functions and 
series for theta functions given by \eqref{eqn_Intro_GeomSqSeries_NewSeriesExp} 
are decidedly new in form and unlike other known series expansions for the 
special case examples we cite within the article. 
What is interesting to note about the proofs we obtain for these new 
series is that we apply the seemingly unrelated theory of 
J-fractions, and more general results on continued fractions, to a 
computationally-discovered sequence implicit to the typically 
combinatorially motivated definition of \eqref{eqn_J-Fraction_Expansions} to 
reach our new results for many special functions. 
As such, the rational $h^{th}$ partial sums with respect to $q$ and $z$ 
of the infinite series on the 
right-hand-side of \eqref{eqn_Intro_GeomSqSeries_NewSeriesExp} 
generate the expected series coefficients over 
$z$ (or $q$ if $z \equiv z(q)$ depends only on $q$) up to order $h+1$. 
The existence of the infinite series expansion follows by applying 
\emph{Pringsheim's theorem} to our computationally-guided ``guesses`` at the 
correctness of the sequences we initially defined in 
\eqref{eqn_chq_subseq_def} (see Remark \ref{remark_StrongerStmts_OfThe_Theorem} on 
page \pageref{remark_StrongerStmts_OfThe_Theorem}). 

\subsubsection*{Extensions of Our Computational Method} 

We note that our method of discovering the form of the infinite square series 
J-fraction expansion, $J_{\infty}^{[\sq]}(q, z)$, is easily replicated to find 
further analogous, if not sometimes related, new expansions for 
other special $q$-series generating functions. 
In the concluding remarks given in 
Section \ref{subSection_Conclusions_Remarks}, we point out several additional 
computationally discovered sequence variants implicit to the J-fractions 
defined in \eqref{eqn_J-Fraction_Expansions} and provide a table of these 
identified component sequences generating a few notable $q$-series variants, 
including \emph{$q$-exponential functions}, special cases of 
\emph{$q$-hypergeometric functions}, and alternate known series related to the 
generating function for the partition function $p(n)$ 
(see Table \ref{table_Conclusions_SpecialJ-Fraction_ConjSubSeq_Results} on page 
\pageref{table_Conclusions_SpecialJ-Fraction_ConjSubSeq_Results}). 
We also provide as an example, but do not rigorously prove, 
another infinite $q$-series-like infinite sum analog to 
\eqref{eqn_Intro_GeomSqSeries_NewSeriesExp} which provides a generating function 
for the $q$-Pochhammer symbol, 
$\QPochhammer{a}{q}{n} = (1-a)(1-aq)\cdots(1-a q^{n-1})$. 
Another example which motivates further study of our methods described within this 
article, but that we do not explicitly cite closed-form sums for, 
provides generating functions for the \emph{divisor function}, 
$d(n) = \sum_{d | n} 1$, in the form of\footnote{ 
     Additional identities of K. Dilcher from his article titled 
     ``\emph{Some $q$-series identities related to divisor functions}'' 
     provide expansions of the series coefficients of 
     $\QPochhammer{q}{q}{\infty} \times 
      \sum_n n^{\alpha+1} q^n / \QPochhammer{q}{q}{n}$ 
     when $\alpha \in \mathbb{Z}^{+}$. We can then generate these series by 
     considering higher-order derivatives of our new J-fraction series for 
     $1 / \QPochhammer{z}{q}{\infty}$ with respect to $z$ in the known 
     Stirling number transformations proved in \citep[\S 2]{SQSERIESMDS}. 
} 
\begin{align*} 
\sum_{n=1}^{\infty} d(n) q^n & = \QPochhammer{q}{q}{\infty} \times 
     \sum_{n=1}^{\infty} \frac{n^{} q^n}{\QPochhammer{q}{q}{n}}, 
\end{align*} 
where we can generate the terms of 
$n^{} / \QPochhammer{q}{q}{n}$ through first-order derivatives of the 
newly identified series for $1 / \QPochhammer{z}{q}{\infty}$ 
with respect to $z$ defined in Section \ref{subSection_Conclusions_Remarks}. 

\section{Proofs of the J-Fraction Representation} 
\label{Section_Proofs} 

\subsection{Exact Formulas for the Convergent Functions} 

\begin{definition}[$q$-Pochhammer Symbols and the $q$-Binomial Coefficients] 
\label{def_qBinom_qPH}
For fixed non-zero $z, q \in \mathbb{C}$ and integers $n \geq 0$, 
we define the \emph{$q$-Pochhammer symbol}, $\QPochhammer{z}{q}{n}$, 
as the product \citep[\S 17.2]{NISTHB} 
\begin{align*} 
\tag{$q$-Pochhammer Symbol} 
\QPochhammer{z}{q}{n} & = 
     \begin{cases} 
     \prod_{j=0}^{n-1} (1-z q^{j}) & \text{ if $n \geq 1$; } \\ 
     1 & \text{ if $n = 0$. } 
     \end{cases} 
\end{align*} 
For $0 \leq m \leq n$, we define the 
\emph{Gaussian polynomial}, or \emph{$q$-binomial coefficient}, 
$\QBinomial{n}{m}{q}$, exactly as follows \citep[\S 17.2; \S 26.9]{NISTHB}: 
\begin{align*} 
\tag{$q$-Binomial Coefficients} 
\QBinomial{n}{m}{q} & := 
     \frac{\QPochhammer{q}{q}{n}}{\QPochhammer{q}{q}{m} 
     \QPochhammer{q}{q}{n-m}} \\ 
     & \phantom{:} = 
     \begin{cases} 
          \frac{(1-q^n)(1-q^{n-1}) \cdots (1-q^{n-m+1})}{ 
          (1-q)(1-q^2) \cdots (1-q^m)} & \text{ if $2 \leq m \leq n$; } \\ 
          1 + q + q^2 + \cdots + q^{n-1} & 
          \text{ if $m = 1$; } \\ 
          1 & \text{ if $m = 0$; } \\ 
          0 & \text{ otherwise. } 
     \end{cases} 
\end{align*} 
The $q$-Pochhammer symbol is implicitly expanded in powers of $z$ by the 
\emph{$q$-binomial coefficients}, $\QBinomial{n}{m}{q}$, as 
\begin{align*} 
\QPochhammer{z}{q}{n} & = (1-z)(1-zq) \cdots (1-z q^{n-1}) \\ 
\tag{$q$-Binomial Theorem} 
     & = 
     \sum_{0 \leq i \leq n} \QBinomial{n}{i}{q} q^{\binom{i}{2}} (-z)^{i}. 
\end{align*} 
The product-based definition of the $q$-Pochhammer symbol then implies that 
we have the following pair of triangular recurrences defining the 
$q$-binomial coefficients where $\QBinomial{n}{m}{q} = \QBinomial{n}{n-m}{q}$ 
for all $0 \leq m \leq n$ \citep[\S 17.2(ii)]{NISTHB}: 
\begin{align} 
\notag 
\QBinomial{n}{m}{q} & = 
     \QBinomial{n-1}{m-1}{q} + q^{m\phantom{-n}} \QBinomial{n-1}{m}{q} \\ 
\label{eqn_QBinomialCoeff_triangular_recs} 
\QBinomial{n}{m}{q} & = 
     \QBinomial{n-1}{m}{q} + q^{n-m} \QBinomial{n-1}{m-1}{q}. 
\end{align} 
\end{definition} 

\begin{prop}[Denominator Convergent Function Formula]
\label{prop_DenomConvFn_Seq_Formula} 
For fixed non-zero $z, q \in \mathbb{C}$ and integers $h \geq 0$, 
we have an exact representation of the denominator convergent functions, 
$\ConvQ_h(q, z)$, expanded by the $q$-binomial coefficients, 
$\QBinomial{n}{m}{q^2}$, from Definition \ref{def_qBinom_qPH} 
in the following form: 
\begin{align*} 
Q_h(q, z) & = \sum_{0 \leq i \leq h} 
     \QBinomial{h}{i}{q^2} q^{(2h-1) i} (-z)^i. 
\end{align*} 
\end{prop}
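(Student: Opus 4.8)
The plan is to establish the claimed closed form by induction on $h$, using the denominator recurrence in \eqref{eqn_ConvFn_PhzQhz_rdefs}. First I would dispatch the base cases $h = 0$ and $h = 1$ directly: the Iverson-bracket terms force $\ConvQ_0(q,z) = 1$ and $\ConvQ_1(q,z) = 1 - c_1 z = 1 - qz$, which agree with the proposed sum since $\QBinomial{0}{0}{q^2} = \QBinomial{1}{0}{q^2} = \QBinomial{1}{1}{q^2} = 1$. For $h \geq 2$ the Iverson terms all vanish, so it suffices to verify that the right-hand side of the summation formula satisfies the homogeneous three-term recurrence $\ConvQ_h = (1 - c_h z)\ConvQ_{h-1} - \ab_h z^2 \ConvQ_{h-2}$. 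Substituting the inductive hypotheses for $\ConvQ_{h-1}$ and $\ConvQ_{h-2}$ and extracting the coefficient of $(-z)^i$ reduces the entire problem to proving, for each $0 \leq i \leq h$, the single $q$-binomial identity
\[
\QBinomial{h}{i}{q^2} q^{(2h-1)i} = \QBinomial{h-1}{i}{q^2} q^{(2h-3)i} + c_h \QBinomial{h-1}{i-1}{q^2} q^{(2h-3)(i-1)} - \ab_h \QBinomial{h-2}{i-2}{q^2} q^{(2h-5)(i-2)}.
\]

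Next I would insert the explicit values $c_h = q^{2h-3}(q^{2h}+q^{2h-2}-1)$ and $\ab_h = q^{6h-10}(q^{2h-2}-1)$ from \eqref{eqn_chq_subseq_def} and \eqref{eqn_abhq_subseq_def}, then factor the common power $q^{(2h-3)i}$ out of every term. The $c_h$-term contributes the polynomial factor $(q^{2h}+q^{2h-2}-1)$, and after the exponent bookkeeping $6h-10+(2h-5)(i-2) = (2h-2i) + (2h-3)i$ the $\ab_h$-term contributes $-(q^{2h-2}-1)q^{2h-2i}$. Writing $A = \QBinomial{h-1}{i}{q^2}$, $B = \QBinomial{h-1}{i-1}{q^2}$, and $C = \QBinomial{h-2}{i-2}{q^2}$, and expanding the left-hand coefficient $\QBinomial{h}{i}{q^2}$ by the first recurrence in \eqref{eqn_QBinomialCoeff_triangular_recs} as $B + q^{2i}A$, the identity collapses to
\[
A q^{4i} + B q^{2i} = A + (q^{2h}+q^{2h-2}-1) B - (q^{2h-2}-1) q^{2h-2i} C.
\]

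The decisive simplification comes from \emph{differencing} the two triangular recurrences in \eqref{eqn_QBinomialCoeff_triangular_recs} (taken with base $q^2$, upper index $h$, lower index $i$), which yields the two-term relation $A\,(1 - q^{2i}) = B\,(1 - q^{2(h-i)})$. I would use this to rewrite $A(q^{4i}-1) = -A(1-q^{2i})(1+q^{2i}) = -B(1-q^{2(h-i)})(1+q^{2i})$, thereby converting the lone $A$-term into $B$-terms. After this substitution the $B$-coefficients cancel almost entirely, leaving
\[
B\,(q^{2h-2i} - q^{2h-2}) + (q^{2h-2}-1)\, q^{2h-2i}\, C = 0,
\]
and dividing out $q^{2h-2i}$ gives precisely the elementary relation $\QBinomial{h-1}{i-1}{q^2}(1-q^{2(i-1)}) = (1-q^{2(h-1)})\QBinomial{h-2}{i-2}{q^2}$, which follows immediately from the product definition in Definition \ref{def_qBinom_qPH} since the ratio of these two coefficients telescopes to $(1-q^{2(h-1)})/(1-q^{2(i-1)})$.

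I expect the main obstacle to be organizational rather than conceptual: carrying the powers of $q$ through the substitution of $c_h$ and $\ab_h$ so that the common factor $q^{(2h-3)i}$ genuinely factors out of all three terms, and recognizing that the relation obtained by subtracting the two $q$-binomial recurrences is exactly the mechanism that eliminates the $Aq^{4i}$ term and triggers the cancellation. I would also verify the boundary indices $i \in \{0, 1, h\}$ separately, where one or more of $A$, $B$, $C$ vanishes because its lower index is negative or exceeds its upper index; in each such case both sides of the reduced relation are either zero or match trivially, so the inductive step closes for all $0 \leq i \leq h$ and the formula holds for every $h \geq 0$.
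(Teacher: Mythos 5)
Your proposal is correct and follows essentially the same route as the paper: induction on $h$ through the three-term recurrence \eqref{eqn_ConvFn_PhzQhz_rdefs}, reducing the inductive step to exactly the coefficient-level identity the paper labels (ii), and then verifying that identity using consequences of the triangular recurrences \eqref{eqn_QBinomialCoeff_triangular_recs}. The only difference is bookkeeping: where the paper chains the small lemmas (iii)--(vii) and treats the $i=0,1$ terms separately, you difference the two recurrences to get $\QBinomial{h-1}{i}{q^2}\left(1-q^{2i}\right) = \QBinomial{h-1}{i-1}{q^2}\left(1-q^{2(h-i)}\right)$ and close with the absorption identity $\QBinomial{h-1}{i-1}{q^2}\left(1-q^{2(i-1)}\right) = \left(1-q^{2(h-1)}\right)\QBinomial{h-2}{i-2}{q^2}$, an equivalent (and arguably tidier) path through the same $q$-binomial algebra.
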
 
\begin{proof} 
The proof follows from \eqref{eqn_ConvFn_PhzQhz_rdefs} and 
repeated applications of \eqref{eqn_QBinomialCoeff_triangular_recs} by 
induction on $h$. In particular, we first notice that 
Table \ref{table_ConvFnQhz_spcases} implies that the right-hand-side 
formula stated by the proposition holds when $h := 0, 1$. 
For some $h \geq 2$, we then assume that the right-hand-side formula for 
$\ConvQ_m(q, z)$ in the proposition statement 
is correct for all $0 \leq m < h$, which, in particular, implies that the 
stated formula holds for $\ConvQ_{h-1}(q, z)$ and $\ConvQ_{h-2}(q, z)$ in the 
recurrence relation given in \eqref{eqn_ConvFn_PhzQhz_rdefs}. 
If we shift the indices of summation, this implies that 
\begin{align*} 
\ConvQ_h(q, z) & = 
     \underset{ := \IC_{h,01}(q, z)}{\underbrace{ 
     \QBinomial{h-1}{0}{q^2} + \QBinomial{h-1}{1}{q^2} q^{2h-3} (-z) 
     -[z^1] \left(\sum_{1 \leq i \leq h} 
     \QBinomial{h-1}{i-1}{q^2} q^{(2h-3)(i-1)} 
     (-1)^{i-1} z^i\right) \cdot c_h^{[\sq]}(q) z}} \\ 
     & \phantom{=\quad} +  
     \sum_{2 \leq i \leq h} \Biggl[ 
     \QBinomial{h-1}{i}{q^2} q^{-2i} + 
     \left(q^{4h-3} + q^{4h-5} - q^{2h-3}\right) q^{-2i} 
     \QBinomial{h-1}{i-1}{q^2} \\ 
\tag{i} 
     & \phantom{=\quad+\sum\Biggl[\ } + 
     \left(1-q^{2h-2}\right) q^{2h-4i} \QBinomial{h-2}{i-2}{q^2} 
     \Biggr] q^{(2h-1) i} (-z)^i \\ 
\end{align*} 
We can show that $\IC_{h,01}(q, z) = 1+\QBinomial{h}{1}{q^2} q^{2h-1} (-z)$ by 
expanding the first three terms in the last equation as 
\begin{align*} 
\IC_{h,01}(q, z) & = 
     \QBinomial{h-1}{0}{q^2} + \QBinomial{h-1}{1}{q^2} q^{2h-3} (-z)
     - \QBinomial{h-1}{0}{q^2} q^{2h-3} z \left(q^{2h} + q^{2h-2} - 1\right) \\ 
     & = 
     1 - q^{2h-1} z \cdot \left[\QBinomial{h-1}{1}{q^2} + 
     q^{2h} + q^{2h-2} - 1\right] \cdot q^{-2} \\ 
     & = 
     1 - q^{2h-1} z \times q^{-2} \cdot \left(1 + q^2 + q^4 + \cdots + 
     q^{2(h-2)} + q^{2h-2} + q^{2h} - 1\right) \\ 
     & = 
     1 + \QBinomial{h-1}{1}{q^2} q^{2h-1} (-z). 
\end{align*} 
We complete the proof by simplifying the coefficients of the 
powers of $(-z)^i$ for $2 \leq i \leq h$ on the right-hand-side of (i). 
More precisely, we show that for $i \geq 2$, we have the next equation in 
(ii), which we see completes the proof of our stated result. 
\begin{align*} 
\tag{ii} 
\QBinomial{h}{i}{q^2} & = 
     \QBinomial{h-1}{i}{q^2} q^{-2i} + 
     \left(q^{2h} + q^{2h-2} - 1\right) q^{-2i} 
     \QBinomial{h-1}{i-1}{q^2} + 
     \left(1-q^{2h-2}\right) q^{2h-4i} \QBinomial{h-2}{i-2}{q^2} 
\end{align*} 
We will require the next few lemmas following immediately as one and two-line 
consequences of the two recurrence relations for the $q$-binomial coefficients 
given in \eqref{eqn_QBinomialCoeff_triangular_recs}. 
\begin{align*} 
\tag{iii} 
q^{-2i} \left(q^{2h-2} - 1\right) \QBinomial{h-2}{i-1}{q^2} & = 
     q^{-2i} (q^{2h-2}-1)\left[\QBinomial{h-1}{i-1}{q^2} - 
     q^{2(h-i)} \QBinomial{h-2}{i-2}{q^2}\right] \\ 
\tag{iv} 
q^{2(h-i)} \QBinomial{h-1}{i-1}{q^2} & = 
     \QBinomial{h}{i}{q^2} - \QBinomial{h-1}{i}{q^2} \\ 
\tag{v} 
q^{2(h-1-i)} \QBinomial{h-2}{i-1}{q^2} & = 
     \QBinomial{h-1}{i}{q^2} - \QBinomial{h-2}{i}{q^2} \\ 
\tag{vi} 
q^{-2i} \QBinomial{h-1}{i}{q^2} & = 
     q^{-2i} \QBinomial{h-2}{i-1}{q^2} + \QBinomial{h-2}{i}{q^2} \\ 
\tag{vii} 
q^{2(h-i)} \QBinomial{h-1}{i-1}{q^2} & = 
     \QBinomial{h}{i}{q^2} - \QBinomial{h-1}{i}{q^2} 
\end{align*} 
We then proceed by simplifying the right-hand-side of (ii), 
denoted in shorthand notation by $\RHSOp[(ii)]$, as follows: 
\begin{align*} 
\RHSOp[(ii)] & = 
     q^{-2i} \QBinomial{h-1}{i}{q^2} + q^{2(h-i)} \QBinomial{h-1}{i-1}{q^2} + 
     q^{-2i} \left(q^{2h-2}-1\right) \QBinomial{h-2}{i-1}{q^2} && 
     \text{ by (iii) } \\ 
     & = 
     \QBinomial{h-2}{i}{q^2} + q^{2(h-i)} \QBinomial{h-1}{i-1}{q^2} + 
     q^{2(h-1-i)} \QBinomial{h-2}{i-1}{q^2} && 
     \text{ by (vi)} \\ 
     & = 
     \QBinomial{h}{i}{q^2} && \text{ by (iv), (v), and (vii). } 
     \qedhere 
\end{align*} 
\end{proof} 

\begin{remark}[A Few Immediate Consequences] 
\label{remark_Convhqz_recs_and_finite_sums}
The formula for the exact expansion of the convergent functions, 
$\ConvQ_h(q, z)$, for each $h \geq 1$ proved in 
Proposition \ref{prop_DenomConvFn_Seq_Formula} 
provides several immediate identities for the $h^{th}$ convergent functions, 
$\Conv_h(q, z)$, expressed through the denominator convergent functions 
\citep[\cf \S 1.12(ii)]{NISTHB}. 
In particular, for the \emph{$h^{th}$ modulus} sequence, 
$\lambda_h(q) := a_2 a_3 \cdots a_{h+1}$ with 
$a_h := \ab_h^{[\sq]}(q) \cdot z^2$, in \citep[\S 3]{FLAJOLET80B} 
we have the following recurrence relation and exact expansion of the 
$h^{th}$ convergent function, $\Conv_h(q, z)$: 
\begin{align} 
\notag 
\Conv_h(q, z) & = \Conv_{h-1}(q, z) + 
     \frac{(-1)^{h-1} q^{(3h-4)(h-1)} \QPochhammer{q^2}{q^2}{h-1} z^{2h-2}}{ 
     \ConvQ_{h-1}(q, z) \ConvQ_h(q, z)} \\ 
\label{eqn_Convhqz_finite_q-series_sum_repr}
     & = 
     \sum_{1 \leq i \leq h} 
     \frac{(-1)^{i-1} q^{(3i-4)(i-1)} \QPochhammer{q^2}{q^2}{i-1} z^{2i-2}}{ 
     \ConvQ_{i-1}(q, z) \ConvQ_i(q, z)}. 
\end{align} 
We are interested in the forms of the $q$-series expansions of the 
convergent functions, $\Conv_h(q, q^2)$, in the applications discussed in 
Section \ref{Section_Congruences_GFs} of the article below. 
Notice that Proposition \ref{prop_DenomConvFn_Seq_Formula} 
applied to the forms of the previous equation implies that we have the 
exact formulas given by 
\begin{align*} 
\Conv_h(q, q^2) & = 
     \sum_{1 \leq i \leq h} 
     \frac{(-1)^{i-1} q^{3i(i-1)} \QPochhammer{q^2}{q^2}{i-1}}{ 
     \ConvQ_{i-1}(q, q^2) \ConvQ_i(q, q^2)} \\ 
     & = 
     \sum_{1 \leq i \leq h} 
     \frac{(-1)^{i-1} q^{3i(i-1)} \QPochhammer{q^2}{q^2}{i-1}}{ 
     \sum\limits_{0 \leq n < 2i} \left(\sum\limits_{0 \leq j \leq n} 
     \QBinomial{i}{j}{q^2} 
     \QBinomial{i-1}{n-j}{q^2} q^{2j} \right) (-q^{2i-1})^n}. 
\end{align*} 
\end{remark} 

\begin{prop}[Numerator Convergent Function Formula]
\label{prop_NumConvFn_Seq_Formula} 
For fixed non-zero $z, q \in \mathbb{C}$ and integers $h \geq 0$, 
we have an exact representation of the numerator convergent functions, 
$\ConvP_h(q, z)$, expanded in the following form: 
\begin{align*} 
P_h(q, z) & = \sum_{0 \leq n < h} \left( 
     \sum_{0 \leq i \leq n} 
     \QBinomial{h}{i}{q^2} q^{(2h-1) i} (-1)^i q^{(n-i)^2} 
     \right) z^n. 
\end{align*} 
\end{prop}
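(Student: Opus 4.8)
The plan is to recast the claimed identity as a statement about a Cauchy product and then transfer the denominator recurrence already analyzed in Proposition \ref{prop_DenomConvFn_Seq_Formula}. Write $G(q,z) := \sum_{m \geq 0} q^{m^2} z^m$ for the (formal) square series itself. Using the exact formula $[z^i] \ConvQ_h(q,z) = \QBinomial{h}{i}{q^2} q^{(2h-1)i} (-1)^i$ from Proposition \ref{prop_DenomConvFn_Seq_Formula}, the Cauchy product has coefficients $[z^n]\bigl(\ConvQ_h(q,z) G(q,z)\bigr) = \sum_{0 \leq i \leq n} \QBinomial{h}{i}{q^2} q^{(2h-1)i} (-1)^i q^{(n-i)^2}$, which for every $0 \leq n < h$ is exactly the inner coefficient asserted by the proposition. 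Hence it suffices to introduce the defect series $D_h(q,z) := \ConvQ_h(q,z) G(q,z) - \ConvP_h(q,z)$ and to prove two facts by induction on $h$: that $\deg_z \ConvP_h \leq h-1$, and that $D_h = O(z^h)$, i.e.\ $[z^n] D_h = 0$ for all $0 \leq n \leq h-1$. Together these give $[z^n]\ConvP_h = [z^n]\bigl(\ConvQ_h G\bigr)$ for $0 \leq n < h$ and $[z^n]\ConvP_h = 0$ for $n \geq h$, which is the statement.

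The inductive step is nearly mechanical. Since $\ConvP_h$ and $\ConvQ_h$ obey the same homogeneous recurrence $X_h = (1 - c_h^{[\sq]}(q) z) X_{h-1} - \ab_h^{[\sq]}(q) z^2 X_{h-2}$ for $h \geq 2$ (the inhomogeneous $\Iverson{h=1}$ and $\Iverson{h=0}$ terms in \eqref{eqn_ConvFn_PhzQhz_rdefs} only perturb the base cases), multiplying the $\ConvQ_h$-recurrence by $G$ and subtracting the $\ConvP_h$-recurrence yields the identical recurrence for the defect, $D_h = (1 - c_h^{[\sq]}(q) z) D_{h-1} - \ab_h^{[\sq]}(q) z^2 D_{h-2}$, and the degree bound propagates immediately. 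Assuming $D_{h-1} = O(z^{h-1})$ and $D_{h-2} = O(z^{h-2})$, the term $\ab_h^{[\sq]}(q) z^2 D_{h-2}$ is $O(z^h)$ while $(1 - c_h^{[\sq]}(q) z) D_{h-1}$ is $O(z^{h-1})$, so every coefficient of $D_h$ below order $h$ vanishes except possibly the one at $z^{h-1}$, which collapses to $[z^{h-1}] D_h = [z^{h-1}] D_{h-1}$. Because $\deg_z \ConvP_{h-1} \leq h-2$, this equals $[z^{h-1}]\bigl(\ConvQ_{h-1}(q,z) G(q,z)\bigr)$, so the entire argument reduces to showing that this single coefficient is zero. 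The base cases $h \leq 2$ are read off from Table \ref{table_ConvFnPhz_spcases} together with a one-line check of $D_0$ and $D_1$.

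The crux, and the only genuinely new computation, is therefore the auxiliary vanishing identity $\sum_{0 \leq i \leq m} \QBinomial{m}{i}{q^2} q^{(2m-1)i} (-1)^i q^{(m-i)^2} = 0$ for every $m \geq 1$. I would prove it by reversing the summation index via $i \mapsto m-i$ and using $\QBinomial{m}{m-j}{q^2} = \QBinomial{m}{j}{q^2}$ to pull out the factor $(-1)^m q^{(2m-1)m}$, leaving $\sum_{0 \leq j \leq m} \QBinomial{m}{j}{q^2} (q^2)^{\binom{j}{2}} \bigl(-q^{-2(m-1)}\bigr)^{j}$ after rewriting $q^{j^2 - (2m-1)j} = (q^2)^{\binom{j}{2}} q^{-2(m-1)j}$. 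By the $q$-binomial theorem of Definition \ref{def_qBinom_qPH} in base $q^2$ this sum is exactly $\QPochhammer{q^{-2(m-1)}}{q^2}{m} = \prod_{k=0}^{m-1}\bigl(1 - q^{2(k-m+1)}\bigr)$, whose factor at $k = m-1$ is $1 - q^0 = 0$; hence the product, and with it the whole sum, vanishes. I expect this identity to be the main obstacle, since it is where the exponent shape $(2h-1)i$ inherited from $\ConvQ_h$ must conspire with the Gaussian exponent $q^{(m-i)^2}$ of the square series, and everything else merely transfers the denominator recursion. It is worth noting that this vanishing in fact forces $[z^h] D_h = 0$ as well, so $\ConvP_h/\ConvQ_h$ reproduces the coefficient $q^{h^2}$ one order beyond the range needed here, in agreement with Theorem \ref{theorem_main_theorem}.
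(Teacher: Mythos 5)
Your proof is correct, but it is a genuinely different argument from the one in the paper, so it is worth comparing the two. The paper attacks the formula head-on: it sets $C_{h,n}(q) := [z^n]\ConvP_h(q,z)$, extracts the coefficient recurrence \eqref{eqn_Chnq_rec} from \eqref{eqn_ConvFn_PhzQhz_rdefs}, and runs a double induction (on the coefficient index $n$, with an inner telescoping induction over $h$) that hinges on the two bespoke $q$-binomial identities \eqref{eqn_proof_Phqz_formula_tag_starstar} and \eqref{eqn_proof_Phqz_formula_tag_starstarstar}; notably, it never uses Proposition \ref{prop_DenomConvFn_Seq_Formula}. You instead reformulate the claim as $\ConvP_h(q,z) \equiv \ConvQ_h(q,z)\,G(q,z) \pmod{z^{h}}$ together with the degree bound $\deg_z \ConvP_h \leq h-1$, where $G(q,z) = \sum_{m \geq 0} q^{m^2} z^m$, transfer the homogeneous recurrence (valid for $h \geq 2$, since the Iverson terms only touch $h \in \{0,1\}$) to the defect $D_h = \ConvQ_h G - \ConvP_h$, and reduce everything to the single vanishing identity $\sum_{0 \leq i \leq m} \QBinomial{m}{i}{q^2} q^{(2m-1)i} (-1)^i q^{(m-i)^2} = 0$ for $m \geq 1$. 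That identity is true, and your index-reversal proof works; it can even be shortened, since $q^{(2m-1)i + (m-i)^2} = q^{m^2} (q^2)^{\binom{i}{2}}$ directly, so the sum equals $q^{m^2} \QPochhammer{1}{q^2}{m} = 0$ by the $q$-binomial theorem of Definition \ref{def_qBinom_qPH} with $z=1$. What your route buys: it is shorter and more structural, it makes visible why the exponent $(2h-1)i$ inherited from $\ConvQ_h$ must ``conspire'' with the Gaussian exponent, and it yields Theorem \ref{theorem_main_theorem} essentially for free, because $\ConvQ_h(q,0) = 1$ makes $D_h = O(z^{h+1})$ equivalent to $\Conv_h(q,z) \equiv G(q,z) \pmod{z^{h+1}}$ --- indeed, your vanishing identity is exactly the $q$-binomial-theorem computation the paper performs separately in the final ($n = h$) step of its proof of the main theorem. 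What it costs: your proof is no longer independent of Proposition \ref{prop_DenomConvFn_Seq_Formula}, and the two inductive claims (degree bound and defect order) should be packaged as one simultaneous induction, since the step for $D_h$ at order $z^{h-1}$ invokes the degree bound at level $h-1$; that is a presentational fix, not a gap.
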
 
\begin{proof} 
If we let $C_{h,h}(q) := [z^n] \ConvP_h(q, z)$, we see immediately from 
\eqref{eqn_ConvFn_PhzQhz_rdefs} that we have a recurrence relation for these 
coefficients given by 
\begin{align} 
\label{eqn_Chnq_rec} 
C_{h,n}(q) & = C_{h-1,n}(q) - q^{2h-3} (q^{2h}+q^{2h-2}-1) C_{h-1,n-1}(q) - 
     q^{6h-10} (q^{2h-2}-1) C_{h-2,n-2}(q). 
\end{align} 
We must prove that for all $h \geq 2$ and $0 \leq k < h$, we have the 
following result: 
\begin{align} 
\label{eqn_Phqz_formula_IHhn} 
C_{h,k}(q) & = \sum_{0 \leq i \leq k} \QBinomial{h}{i}{q^2} 
     q^{(2h-1)i} (-1)^i q^{(k-i)^2}. 
\end{align} 
We proceed by induction on $n$. 
Since $C_{n,k}(q) \equiv 0$ whenever $k < 0$, we see easily from 
\eqref{eqn_Chnq_rec} that we have formulas for all $h \geq 2$ in the 
first two cases of $C_{h,n}$ when $n = 0, 1$ given by 
\begin{align*} 
\tag{i} 
C_{h,0}(q) & = 1 \\ 
\tag{ii} 
C_{h,1}(q) & = C_{h-1,1}(q) - q^{2h-3}(q^{2h}+q^{2h-2}-1) \cdot 1 \\ 
     & = 
     -\sum_{i=2}^h q^{2i-3}(q^{2i}+q^{2i-2}-1) \\ 
     & = 
     q - q^{2h-1}\left(1+q^2+q^4+\cdots+q^{2h-2}\right), 
\end{align*} 
each of which coincide with the formula in \eqref{eqn_Phqz_formula_IHhn} 
whenever $h \geq 2$. 
Let $n \geq 2$ and suppose that \eqref{eqn_Phqz_formula_IHhn} is true for all 
$0 \leq k \leq n-1$ when $h \geq 2$. 
We first show that 
\begin{align*} 
C_{h,n}(q) - C_{h-1,n}(q) & = \sum_{i=1}^{n} \QBinomial{h-1}{i-1}{q^2} \left( 
     q^{2h} + q^{2h-2i} - 1\right) q^{(2h-3)i} (-1)^i q^{(n-i)^2}. 
\end{align*} 
To prove the formula given by the last result, 
we expand the left-hand-side of the previous equation according to 
\eqref{eqn_Chnq_rec} as follows: 
\begin{align*} 
C_{h,n}(q) - C_{h-1,n}(q) & = - q^{2h-3} (q^{2h}+q^{2h-2}-1) C_{h-1,n-1}(q) - 
     q^{6h-10} (q^{2h-2}-1) C_{h-2,n-2}(q) \\ 
     & = 
     - q^{2h-3} (q^{2h}+q^{2h-2}-1) \sum_{i=0}^{n-1} \QBinomial{h-1}{i}{q^2} 
     q^{(2h-3)i} (-1)^i q^{(n-1-i)^2} \\ 
     & \phantom{=} - 
     q^{6h-10} (q^{2h-2}-1) \sum_{i=0}^{n-1} \QBinomial{h-2}{i}{q^2} 
     q^{(2h-5)i} (-1)^i q^{(n-2-i)^2} \\ 
     & = 
     -(q^{2h}+q^{2(h-1)}-1) \QBinomial{h-1}{0}{q^2} q^{2h-3} q^{(n-1)^2} \\ 
     & \phantom{=} + 
     \sum_{i=2}^{n} \left((q^{2h}+q^{2h-2}-1) \QBinomial{h-1}{i-1}{q^2} - 
     q^{2h-2i}(q^{2h-2}-1) \QBinomial{h-2}{i-2}{q^2}\right) \times \\ 
     & \phantom{=+\sum\ } \times 
     q^{(2h-3)i} (-1)^i q^{(n-i)^2}. 
\end{align*} 
It suffices to show that for any 
$h \geq 2$ and all $2 \leq i \leq n < h$, we have that 
\begin{align} 
\label{eqn_proof_Phqz_formula_tag_starstar} 
\QBinomial{h-1}{i-1}{q^2} (q^{2h}+q^{2h-2i}-1) & = 
(q^{2h}+q^{2h-2}-1) \QBinomial{h-1}{i-1}{q^2} - q^{2h-2i}(q^{2h-1}-1) 
     \QBinomial{h-2}{i-2}{q^2}. 
\end{align} 
We complete the proof by proving that \eqref{eqn_proof_Phqz_formula_tag_starstar} 
is correct whenever $h \geq 2$ and $2 \leq i \leq n < h$ using the formulas for the 
$q$-binomial coefficients given in terms of the $q$-Pochhammer symbol in 
Definition \ref{def_qBinom_qPH} as follows: 
\begin{align*} 
\RHSOp[\eqref{eqn_proof_Phqz_formula_tag_starstar}] & = 
     (q^{2h}+q^{2h-2}-1) \QBinomial{h-1}{i-1}{q^2} - q^{2h-2i}(q^{2h-1}-1) 
     \QBinomial{h-2}{i-2}{q^2} \\ 
     & = (q^{2h}+q^{2h-2}-1) \frac{\QPochhammer{q^2}{q^2}{h-1}}{ 
     \QPochhammer{q^2}{q^2}{i-1} \QPochhammer{q^2}{q^2}{h-i}} - 
     \frac{q^{2h-2i} (q^{2(h-1)}-1) \QPochhammer{q^2}{q^2}{h-2}}{ 
     \QPochhammer{q^2}{q^2}{i-2} \QPochhammer{q^2}{q^2}{h-i}} \\ 
     & = 
     \QBinomial{h-1}{i-1}{q^2} \left( (q^{2h}+q^{2h-2}-1) + 
     q^{2h-2i}(1-q^{2i-2}) \right) \\ 
     & = 
     \QBinomial{h-1}{i-1}{q^2} \left(q^{2h}+q^{2h-2i}-1\right) 
\end{align*} 
Thus we have that 
\begin{align*} 
C_{h,n}(q) & = C_{h-1,n}(q) + \sum_{i=1}^{n} \QBinomial{h-1}{i-1}{q^2} \left( 
     q^{2h} + q^{2h-2i} - 1\right) q^{(2h-3)i} (-1)^i q^{(n-i)^2} \\ 
     & = 
     \sum_{j=1}^{h} \sum_{i=1}^{n} \QBinomial{j-1}{i-1}{q^2} \left( 
     q^{2j} + q^{2j-2i} - 1\right) q^{(2j-3)i} (-1)^i q^{(n-i)^2}, 
\end{align*} 
and so to complete the proof, it suffices to show that for all integers 
$k \geq 1$ and all $1 \leq i \leq k$, we have the next result which 
we will prove by induction on $k$ ($h$) below. 
\begin{align} 
\label{eqn_proof_Phqz_formula_tag_starstarstar} 
\sum_{j=1}^k \QBinomial{j-1}{i-1}{q^2} \left(q^{2j}+q^{2j-2i}-1\right) q^{(2j-3)i} 
     & = 
     \QBinomial{k}{i}{q^2} q^{(2k-1)i} 
\end{align} 
When $k = i = 1$, we see that 
\begin{align*} 
\QBinomial{0}{0}{q^2} (q^2+1-1) \cdot q^{-1} & = \QBinomial{1}{1}{q^2} \cdot q = q. 
\end{align*} 
Next, we suppose that \eqref{eqn_proof_Phqz_formula_tag_starstarstar} is true 
for all $k < h$ when $h \geq 2$ and rewrite 
\eqref{eqn_proof_Phqz_formula_tag_starstarstar} according to our hypothesis when 
$k = h$ as follows: 
\begin{align*} 
\LHSOp[\eqref{eqn_proof_Phqz_formula_tag_starstarstar}] & = 
     \sum_{j=1}^{h} \QBinomial{j-1}{i-1}{q^2} \left(q^{2j}+q^{2j-2i}-1\right) 
     q^{(2j-3)i} \\ 
     & = 
     q^{(2h-1)i} \cdot q^{-2i} \left( 
     \QBinomial{h-1}{i}{q^2} + \QBinomial{h-1}{i-1}{q^2} q^{2(h-i)} - 
     (1-q^{2h}) \QBinomial{h-1}{i-1}{q^2}\right) \\ 
     & = 
     q^{(2h-1)i} \cdot q^{-2i} \left( 
     \QBinomial{h}{i}{q^2} - (1-q^{2h}) \QBinomial{h-1}{i-1}{q^2}\right) \\ 
     & = 
     q^{(2h-1)i} \cdot q^{-2i} \QBinomial{h}{i}{q^2} \left( 
     1 - \frac{(1-q^{2h})(1-q^{2i})}{(1-q^{2h})}\right) \\ 
     & = 
     \QBinomial{h}{i}{q^2} q^{(2h-1)i}. 
     \qedhere 
\end{align*} 
\end{proof} 

\subsection{Proof of the Main Theorem} 
\label{subSection_ProofOfMainTheorem} 

\begin{proof}[Proof of Theorem \ref{theorem_main_theorem}]  
Since both of $\ConvP_h(q, z)$ and $\ConvQ_h(q, z)$ finite-degree polynomials 
in $z$ (and in $q$), we see that for all $h \geq 1$, the $h^{th}$ 
convergent functions, $\Conv_h(q, z)$, are rational functions of $z$. 
This fact, together with the observation that 
$\deg_z \left\{ \ConvQ_h(q, z) \right\} = h$ for all $h \geq 0$, 
implies that we have $h$-order finite difference equations for the 
coefficients of $\Conv_h(q, z) \equiv \ConvP_h(q, z) / \ConvQ_h(q, z)$ 
of the following form \citep[\cf \S 2]{GFLECT}: 
\begin{align*} 
\tag{i} 
[z^n] \Conv_h(q, z) & = 
     - \sum_{0 < i \leq \min(n, h)} 
     [z^i] \ConvQ_h(q, z) \cdot [z^{h-i}] \Conv_h(q, z) + 
     [z^n] \ConvP_h(q, z). 
\end{align*} 
We first notice that for any functions, $F(z)$ and $G(z)$, such that the 
ratio of these two functions has a power series expansion in $z$ about $0$, 
we have that $[z^0] F(z) / G(z) = F(0) / G(0)$, which by 
Proposition \ref{prop_DenomConvFn_Seq_Formula} and 
Proposition \ref{prop_NumConvFn_Seq_Formula} then implies that 
$[z^0] \Conv_h(q, z) = 1$ for all $h \geq 1$. 
Then if we assume by induction that for fixed $h \geq 1$ and all 
$0 \leq i < n$ we have that $[z^i] \Conv_h(q, z) = q^{i^2}$, we can 
use the two propositions to rewrite (i) of the previous equation in the 
following forms when $0 \leq n < h$: 
\begin{align*} 
[z^n] \Conv_h(q, z) & = 
     -\sum_{1 \leq i \leq n} 
     \QBinomial{h}{i}{q^2} q^{(2h-1) i} (-1)^i q^{(n-i)^2} \\ 
     & \phantom{=\ } + 
     \left(\sum_{0 \leq i \leq n} \QBinomial{h}{i}{q^2} q^{(2h-1) i} 
     (-1)^i q^{(n-i)^2}\right) \cdot \Iverson{n < h}, 
     \text{ where $0 \leq n-i < h$ $\forall n \leq h$} \\ 
     & = 
     \left\{\QBinomial{h}{i}{q^2} q^{(2h-1) i} 
     (-1)^i q^{(n-i)^2}\right\}\Biggr\rvert_{i=0} \\ 
     & = 
     q^{n^2}. 
\end{align*} 
To prove that $[z^h] \Conv_h(q, z) = q^{n^2}$ when $n \equiv h$, we 
expand (i) using the known \emph{$q$-binomial theorem} identity for the 
$q$-Pochhammer symbol, $\QPochhammer{z}{q}{h}$, stated in 
Definition \ref{def_qBinom_qPH} as follows \citep[\S 17.2(iii)]{NISTHB}: 
\begin{align*} 
[z^n] \Conv_h(q, z) & = 
     -\sum_{1 \leq i \leq h} 
     \QBinomial{h}{i}{q^2} q^{(2h-1) i} (-1)^i q^{(n-i)^2} \\ 
     & = 
     -q^{n^2} \times \sum_{1 \leq i \leq h} \QBinomial{h}{i}{q^2} 
     q^{(2h-2n-1) i} (-1)^i q^{i^2} \\ 
     & = 
     -q^{n^2} \times \sum_{1 \leq i \leq h} \QBinomial{h}{i}{q^2} 
     (-1)^i q^{2 \binom{i}{2}} \\ 
     & = 
     -q^{n^2} \left[ \QPochhammer{1}{q^2}{h} - \QBinomial{h}{0}{q^2}\right] \\ 
     & = 
     q^{n^2}. 
     \qedhere 
\end{align*} 
\end{proof} 

\begin{remark}[Convergence and Stronger Statements of the Theorem] 
\label{remark_StrongerStmts_OfThe_Theorem} 
We note that we actually have a stronger result, which is that 
$[z^n] \Conv_h(q, z) = q^{n^2}$ for all $0 \leq n < 2h$ when $h \geq 1$, 
though we only needed to prove the weaker statement given in 
Theorem \ref{theorem_main_theorem} above to prove the 
correctness of our new square series J-fraction expansions defined in 
\eqref{eqn_SquareSeries_J-Fraction_Expansions} of the introduction. 
We can strengthen the statement of the theorem by adding that 
when $0 < |q|, |z| < 1$, 
$J_{\infty}^{[\sq]}(q, z) := \lim_{h \rightarrow \infty} \Conv_h(q, z) < \infty$, 
exists, and moreover, the corresponding infinite sum in 
\eqref{eqn_Convhqz_finite_q-series_sum_repr} 
converges uniformly to the ordinary form of the 
geometric square series we have already proved convergent 
integral representations for in \citep{SQSERIESMDS}. 
The proof of this statement is given by applying 
\emph{Pringsheim's theorem} on the convergence of more general 
continued fractions when 
$|a_h| := |z^2 \cdot q^{6h-10} (q^{2h-2}-1)|$ and 
$|b_h| := |1 - q^{2h-3} (q^{2h} + q^{2h-2} -1) \cdot z|$ where $|q|, |z| < 1$, 
so that $|b_h| - 1 \geq |a_h|$ for all $h \geq 2$, 
which then implies that $|\Conv_h(q, z)| < 1$ for all large enough $h$ 
\citep[\S 1.12(v)]{NISTHB} \citep[\S II]{WALL-CFRACS}. 

We can then use the \emph{Weierstrass $M$-test} to prove that we can differentiate the 
limiting case of the infinite $q$-series in 
\eqref{eqn_Convhqz_finite_q-series_sum_repr} termwise with respect to $z$. 
We use these two facts, namely 1) that the limit function 
$J_{\infty}^{[\sq]}(q, z)$ -- 
and so the new infinite square series representation in 
\eqref{eqn_Convhqz_finite_q-series_sum_repr} -- exists as 
$h \rightarrow \infty$; and 2) that this 
series is uniformly convergent in $z$, to prove new forms of infinite 
$q$-series expansions of these ordinary square series when $|z| < 1$, or 
when the parametric $z := \pm c \cdot q^{pm}$ depends on powers of the 
auxiliary square series parameter $|q| < 1$. 
\end{remark} 

\subsection{Consequences of the Main Theorem} 
\label{subSection_Proofs_ConsequencesOfTheMainTheorem}

\begin{prop}[New Infinite Series Representations of the Geometric Square Series] 
\label{prop_InfiniteGeomSqSeries_qSeries_repr} 
For fixed $q, z \in \mathbb{C}$ defined such that $0 < |q|, |z| < 1$, 
we have the following infinite series representation of the 
ordinary geometric square series, $J_{\infty}^{[\sq]}(q, z)$: 
\begin{align*} 
J_{\infty}^{[\sq]}(q, z) & = \sum_{i=1}^{\infty} 
     \frac{(-1)^{i-1} q^{(3i-4)(i-1)} \QPochhammer{q^2}{q^2}{i-1} z^{2i-2}}{ 
     \sum\limits_{0 \leq j \leq n < 2i} 
     \QBinomial{i}{j}{q^2} \QBinomial{i-1}{n-j}{q^2} q^{2j} \cdot 
     \left(-q^{2i-3} z\right)^n}. 
\end{align*} 
\end{prop}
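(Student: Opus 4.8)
The plan is to obtain the infinite-series representation by taking $h \to \infty$ in the finite-sum expression for $\Conv_h(q,z)$ already established in Remark \ref{remark_Convhqz_recs_and_finite_sums}, equation \eqref{eqn_Convhqz_finite_q-series_sum_repr}. That remark gives, for each finite $h$,
\begin{align*}
\Conv_h(q, z) = \sum_{1 \leq i \leq h}
     \frac{(-1)^{i-1} q^{(3i-4)(i-1)} \QPochhammer{q^2}{q^2}{i-1} z^{2i-2}}{
     \ConvQ_{i-1}(q, z) \ConvQ_i(q, z)},
\end{align*}
so the entire proof reduces to two independent tasks: first, rewriting the denominator $\ConvQ_{i-1}(q,z)\ConvQ_i(q,z)$ into the stated double-sum over $q$-binomial coefficients, and second, justifying that the limit $h \to \infty$ may be taken so that $J_{\infty}^{[\sq]}(q,z) = \sum_{i \geq 1}(\cdots)$.

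First I would handle the denominator. Proposition \ref{prop_DenomConvFn_Seq_Formula} gives the closed form $\ConvQ_m(q,z) = \sum_{0 \leq k \leq m} \QBinomial{m}{k}{q^2} q^{(2m-1)k}(-z)^k$. I would multiply the two such expansions for $m = i-1$ and $m = i$ and collect powers of $z$ by Cauchy product. Writing $n$ for the total power of $(-z)$ and $j$ for the index inherited from the $\ConvQ_i$ factor, the product becomes a sum over $0 \leq j \leq n < 2i$ (since the degrees are $i$ and $i-1$, the top power is $2i-1$, giving the range $n < 2i$ stated in the proposition). The $q$-power bookkeeping must collapse: the factor $\ConvQ_i$ contributes $q^{(2i-1)j}$ and $\ConvQ_{i-1}$ contributes $q^{(2i-3)(n-j)}$, and combined with the prefactor $q^{(3i-4)(i-1)}z^{2i-2}$ these exponents must reorganize into the clean form $q^{2j}\cdot(-q^{2i-3}z)^n$ shown in the statement. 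This is the one genuinely computational step, but it is routine exponent arithmetic once the Cauchy product is set up, so I would verify the exponent identity $(2i-1)j + (2i-3)(n-j) = 2j + (2i-3)n$ (after absorbing the appropriate pieces of the prefactor) and then match signs.

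The remaining task is convergence, and \textbf{this is the step I expect to be the main obstacle}, at least in terms of rigor rather than calculation. I would invoke Remark \ref{remark_StrongerStmts_OfThe_Theorem}, which already asserts via \emph{Pringsheim's theorem} that for $0 < |q|, |z| < 1$ the limit $\lim_{h \to \infty}\Conv_h(q,z)$ exists and equals the ordinary geometric square series, with the tail estimate $|b_h| - 1 \geq |a_h|$ guaranteeing $|\Conv_h(q,z)| < 1$ for large $h$. Since the finite sum in \eqref{eqn_Convhqz_finite_q-series_sum_repr} is precisely the sequence of partial sums of the claimed infinite series, the existence of the limit of $\Conv_h$ is exactly the convergence of the infinite series, and its value is $J_{\infty}^{[\sq]}(q,z)$ by definition of the limiting convergent function. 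Thus I would structure the proof as: (1) cite \eqref{eqn_Convhqz_finite_q-series_sum_repr}; (2) substitute the denominator product formula from Proposition \ref{prop_DenomConvFn_Seq_Formula} to reach the stated summand; and (3) pass to the limit using the Pringsheim-based convergence already recorded in Remark \ref{remark_StrongerStmts_OfThe_Theorem}, so that the partial sums converge to $J_{\infty}^{[\sq]}(q,z)$. The delicate point to state carefully is that term-by-term the $i$-th summand of the finite expansion does not depend on $h$, so no uniform-in-$h$ interchange of limits within a single term is required; only the convergence of the series of (now $h$-independent) terms, which Pringsheim supplies.
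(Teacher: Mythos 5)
Your proposal is correct and follows essentially the same route as the paper's own proof: cite the finite-sum convergent expansion \eqref{eqn_Convhqz_finite_q-series_sum_repr}, rewrite each denominator $\ConvQ_{i-1}(q,z)\ConvQ_i(q,z)$ as a discrete convolution (Cauchy product) via Proposition \ref{prop_DenomConvFn_Seq_Formula}, and pass to the limit $h \to \infty$ using the Pringsheim-based convergence recorded in Remark \ref{remark_StrongerStmts_OfThe_Theorem}. Your exponent check $(2i-1)j + (2i-3)(n-j) = 2j + (2i-3)n$ is exactly the verification the paper leaves implicit (no piece of the prefactor actually needs absorbing), and your observation that the summands are $h$-independent, so no interchange-of-limits issue arises, is a point the paper glosses over.
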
 
\begin{proof} 
We employed known expansions of the convergents to arbitrary 
continued fractions cited for reference in \citep[\S 1.12(ii)]{NISTHB} to 
obtain the finite sum representations of $\Conv_h(q, z)$ stated in 
\eqref{eqn_Convhqz_finite_q-series_sum_repr}, which it is 
easy to see hold for each finite $h \geq 1$. 
We know by the discussion above in 
Remark \ref{remark_StrongerStmts_OfThe_Theorem} that the 
infinite limit, $\lim_{h \rightarrow \infty} \Conv_h(q, z)$, exists, 
which implies that we may form a new infinite series expansion for 
$J_{\infty}^{[\sq]}(q, z)$ by letting $h \longrightarrow \infty$ in 
\eqref{eqn_Convhqz_finite_q-series_sum_repr}. 
We then complete the proof by forming a discrete convolution of the 
coefficients of the polynomial expansions for the 
convergent denominator functions, $\ConvQ_h(q, z)$, in $z$ proved as in the 
formula from Proposition \ref{prop_DenomConvFn_Seq_Formula}. 
\end{proof} 

Another class of series representations for 
special functions involves taking the derivatives of $J_{\infty}^{[\sq]}(q, z)$ 
termwise with respect to $z$. The first-order case of such derivatives of 
our special square series expansions with a corresponding 
integral representation proved in \citep{SQSERIESMDS} given by 
\begin{align*} 
\vartheta_{0,1}(a, b; q, z) & := \sum_{n} (an+b) q^{n^2} z^n \\ 
     & \phantom{:} = 
\int_0^{\infty} 
     \frac{2acz e^{-t^2/2}}{\sqrt{2\pi}} \left[ 
     \frac{\left(c^2 z^2 + 1\right) \cosh\left(t \sqrt{2 \Log(q)}\right) - 2cz}{ 
     \left(c^2 z^2 - 2cz \cosh\left(t \sqrt{2 \Log(q)}\right) + 1\right)^2} 
     \right] dt + 
     b \cdot J_{\infty}^{[\sq]}(q, z), 
\end{align*} 
arises frequently in applications, 
and so is explicitly considered as the particular case in the next corollary. 
Other particular explicit higher-order derivative expansions analogous to the 
formula cited in the 
previous equation for $\vartheta_{0,m}(a, b; q, z)$ when $m \geq 2$ is 
integer-valued are of course similarly easy to obtain directly by repeated 
differentiation of the standard quotient rule. 

\begin{cor}[New Series Representations for Derivatives of the Geometric Square Series] 
\label{cor_InfiniteGeomSqSeries_qSeries_repr_anpb_derivs} 
Suppose that $q, z \in \mathbb{C}$ are defined such that 
$0 < |q|, |z| < 1$, and that the component series functions, 
$f_{q,i}(z)$ and $g_{q,i}(z)$, are defined for integers $i \geq 1$ by the 
next formulas. 
\begin{align*} 
f_{q,i}(z) & = (-1)^{i-1} q^{(3i-4)(i-1)} 
     \QPochhammer{q^2}{q^2}{i-1} z^{2i-2} \\ 
f_{q,i}^{\prime}(z) & = (2i-2) \cdot (-1)^{i-1} q^{(3i-4)(i-1)} 
     \QPochhammer{q^2}{q^2}{i-1} z^{2i-3} \\ 
g_{q,i}(z) & = \sum_{0 \leq j \leq n < 2i} 
     \QBinomial{i}{j}{q^2} \QBinomial{i-1}{n-j}{q^2} q^{2j} \cdot 
     \left(-q^{2i-3} z\right)^n \\ 
g_{q,i}^{\prime}(z) & = \sum_{0 \leq j \leq n < 2i} 
     \QBinomial{i}{j}{q^2} \QBinomial{i-1}{n-j}{q^2} n \cdot q^{2j} \cdot 
     \left(-q^{2i-3}\right)^{n} z^{n-1} 
\end{align*} 
Then for fixed scalars $a, b \in \mathbb{C}$ (not both zero), 
we have a new infinite series for the ordinary square series 
generating functions, $\vartheta_{0,1}(a, b; q, z)$, in the form of 
\begin{align*} 
\vartheta_{0,1}(a, b; q, z) & = 
     a z \times \sum_{i=1}^{\infty} \frac{f_{q,i}^{\prime}(z) g_{q,i}(z) - 
     f_{q,i}(z) g_{q,i}^{\prime}(z)}{\left[g_{q,i}(z)\right]^2} + 
     b \cdot J_{\infty}^{[\sq]}(q, z). 
\end{align*} 
\end{cor}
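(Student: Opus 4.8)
The plan is to reduce the corollary to Proposition~\ref{prop_InfiniteGeomSqSeries_qSeries_repr} by recognizing $\vartheta_{0,1}(a,b;q,z)$ as the image of the geometric square series under a first-order differential operator in $z$. Starting from the defining sum and using linearity together with the elementary identity $n z^n = z \cdot \frac{d}{dz} z^n$, I would first record the decomposition
\begin{align*}
\vartheta_{0,1}(a, b; q, z) & = a \sum_{n} n q^{n^2} z^n + b \sum_{n} q^{n^2} z^n = a z \cdot \frac{d}{dz} J_{\infty}^{[\sq]}(q, z) + b \cdot J_{\infty}^{[\sq]}(q, z),
\end{align*}
valid on the common region of convergence $0 < |q|, |z| < 1$. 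The second summand already appears verbatim in the statement, so the entire problem reduces to evaluating $\frac{d}{dz} J_{\infty}^{[\sq]}(q,z)$ starting from the infinite series of Proposition~\ref{prop_InfiniteGeomSqSeries_qSeries_repr}.

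Next I would differentiate that series termwise. Writing $J_{\infty}^{[\sq]}(q,z) = \sum_{i \geq 1} f_{q,i}(z)/g_{q,i}(z)$ as in the proposition, the quotient rule applied to the $i$-th summand produces exactly $\bigl(f_{q,i}^{\prime} g_{q,i} - f_{q,i} g_{q,i}^{\prime}\bigr)/g_{q,i}^2$, while a direct application of the power rule to $z^{2i-2}$ and to each $z^n$ confirms that $f_{q,i}^{\prime}(z)$ and $g_{q,i}^{\prime}(z)$ coincide with the two formulas recorded in the hypotheses (the factors $2i-2$ and $n$ being precisely the exponents that descend). Since the convolution step in the proof of Proposition~\ref{prop_InfiniteGeomSqSeries_qSeries_repr} identifies $g_{q,i}(z) = \ConvQ_{i-1}(q,z)\, \ConvQ_i(q,z)$ as a polynomial with constant term $1$, each summand is holomorphic on a neighborhood of the relevant bidisk and the formal differentiation is algebraically routine.

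The substantive step, and the one I expect to be the main obstacle, is \emph{justifying} the interchange of $\frac{d}{dz}$ with the infinite sum. For this I would lean on the convergence analysis already set up in Remark~\ref{remark_StrongerStmts_OfThe_Theorem}: Pringsheim's theorem guarantees that $\sum_i f_{q,i}/g_{q,i}$ converges, and the J-fraction modulus estimates $|b_h| - 1 \geq |a_h|$ valid for $|q|, |z| < 1$ keep the denominators $g_{q,i}$ bounded uniformly away from zero on compact subsets. The remaining work is to exhibit a convergent numerical majorant $\sum_i M_i$ dominating $\bigl\lvert \frac{d}{dz}\bigl(f_{q,i}/g_{q,i}\bigr) \bigr\rvert$ uniformly on such compacta; the rapidly increasing $q$-power weight $q^{(3i-4)(i-1)}$ carried by each numerator, combined with the lower bound on $|g_{q,i}|$, should supply such an estimate. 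The Weierstrass $M$-test then yields uniform convergence of the \emph{differentiated} series, which, together with the already established convergence of $\sum_i f_{q,i}/g_{q,i}$, legitimizes termwise differentiation and identifies the differentiated series as $\frac{d}{dz} J_{\infty}^{[\sq]}(q,z)$.

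Assembling these pieces gives
\begin{align*}
\vartheta_{0,1}(a, b; q, z) & = a z \sum_{i=1}^{\infty} \frac{f_{q,i}^{\prime}(z)\, g_{q,i}(z) - f_{q,i}(z)\, g_{q,i}^{\prime}(z)}{\left[g_{q,i}(z)\right]^2} + b \cdot J_{\infty}^{[\sq]}(q, z),
\end{align*}
which is the asserted identity. The same scheme would handle the higher-order derivative expansions $\vartheta_{0,m}(a,b;q,z)$ for $m \geq 2$: one iterates the operator $z\frac{d}{dz}$ and reapplies the $M$-test to each successively differentiated series, the only additional bookkeeping being the repeated quotient-rule derivatives of $f_{q,i}/g_{q,i}$.
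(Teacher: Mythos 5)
Your proposal is correct and follows essentially the same route as the paper: the paper's proof likewise applies the quotient rule termwise to the series of Proposition \ref{prop_InfiniteGeomSqSeries_qSeries_repr} and justifies the interchange of differentiation with the infinite sum by appealing to the uniform convergence established via Pringsheim's theorem and the Weierstrass $M$-test in Remark \ref{remark_StrongerStmts_OfThe_Theorem}. Your write-up simply makes explicit two steps the paper leaves implicit, namely the decomposition $\vartheta_{0,1}(a,b;q,z) = az\,\frac{d}{dz}J_{\infty}^{[\sq]}(q,z) + b\,J_{\infty}^{[\sq]}(q,z)$ and the identification $g_{q,i}(z) = \ConvQ_{i-1}(q,z)\,\ConvQ_{i}(q,z)$.
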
 
\begin{proof} 
The result follows directly from the quotient rule applied to 
Proposition \ref{prop_InfiniteGeomSqSeries_qSeries_repr}. 
We note that we may differentiate the functions of $z$ in the series 
from the previous proposition term by term over each $i$ 
since we have already shown in 
Remark \ref{remark_StrongerStmts_OfThe_Theorem} that this 
limiting case of the $h^{th}$ convergent function series in 
\eqref{eqn_Convhqz_finite_q-series_sum_repr} 
converges uniformly as a function of $z$. 
\end{proof} 

\section{Applications} 
\label{Section_Applications} 
\label{Section_Congruences_GFs}

\begin{table}[ht]
\centering 
\begin{tabular}{|c|l|} \hline 
$h$ & $1 + 2q \cdot \Conv_h(q, q^2)$ \\ \hline 
1 & $\frac{(1+q) \left(-1-q+q^2\right)}{(-1+q) \left(1+q+q^2\right)}$ \\
2 & $\frac{(1+q)^2 \left(-1-q-2 q^3-q^4-q^6+q^7\right)}{\left(1+q^2\right) \left(-1-q-q^4+q^6+q^7\right)}$ \\ 
3 & $\frac{(1+q)^3 \left(1-q+q^2\right) \left(1+q+q^3+q^5+q^6-q^7+q^8-q^9-q^{11}-2 q^{12}-q^{14}+q^{15}\right)}{(-1+q) \left(1+q^2\right) \left(1+q+q^2\right) \left(1+q+q^2+q^3+q^4+q^5+q^6\right) \left(-1+q^2-q^3-q^4+q^5-q^7+q^9\right)}$ \\ 
4 & $\frac{(1+q)^4 \left(1-q+q^2\right) \left(1-q^2+2 q^3+q^6+2 q^8-q^9+q^{10}+q^{12}-2 q^{14}+3 q^{15}-2 q^{16}-q^{18}-q^{21}+q^{22}-2 q^{23}+q^{24}\right)}{\left(1+q^4\right) \left(1+q+q^3+q^4+q^6+q^7+q^8+q^9-q^{13}-q^{14}-q^{15}-q^{16}-q^{17}-q^{18}-q^{19}-q^{20}-q^{21}+q^{25}+q^{26}\right)}$ \\ 
\hline 
\end{tabular} 

\bigskip 
\caption{Special Cases of the Convergent Functions, 
         $\widetilde{\vartheta}_{3,h}(q) := 1 + 2q \cdot \Conv_h(q, q^2)$} 
\label{table_ConvFnConvhq2z_spcases} 

\end{table} 

\subsection{New Generating Functions for the Sums of Squares Functions} 
\label{subSection_rpn_NewOGF_reprs}

The result proved in Proposition \ref{prop_InfiniteGeomSqSeries_qSeries_repr} 
implies that in limiting cases we also have new infinite series for the 
ordinary generating functions of $r_p(n)$ for $p \geq 2$ 
in the following forms: 
\begin{align*} 
\vartheta_3(q)^p & = \left(1 + 2q \times \sum_{i=1}^{\infty} 
     \frac{(-1)^{i-1} q^{3i(i-1)} \QPochhammer{q^2}{q^2}{i-1}}{ 
     \sum\limits_{0 \leq n < 2i} \left(\sum\limits_{0 \leq j \leq n} 
     \QBinomial{i}{j}{q^2} 
     \QBinomial{i-1}{n-j}{q^2} q^{2j} \right) (-q^{2i-1})^n} 
     \right)^{p}. 
\end{align*} 
For $0 \leq n \leq (2h-1)^2$ (and occasionally for slightly larger $n$), 
we have that 
\begin{align*} 
r_p(n) = [q^n] \left(1 + 2q \cdot \Conv_h(q, q^2)\right)^p. 
\end{align*} 
Several special cases of the forms of the modified $h^{th}$ 
convergent functions, 
$\widetilde{\vartheta}_{3,h}(q) := 1 + 2q \cdot \Conv_h(q, q^2)$, 
are expanded in factored form in Table \ref{table_ConvFnConvhq2z_spcases}. 
These special case convergent functions also satisfy expansions given by 
\begin{align*} 
\widetilde{\vartheta}_{3,2}(q) & = 
     1+\frac{2 (2+3 q)}{13 \left(1+q^2\right)}-\frac{2 \left(-2+8 q+12 q^2+5 q^3+12 q^4+18 q^5+3 q^6\right)}{13 \left(-1-q-q^4+q^6+q^7\right)} \\ 
\widetilde{\vartheta}_{3,3}(q) & = 
     1-\frac{4}{21 (-1+q)}+\frac{2 (3+4 q)}{25 \left(1+q^2\right)}+\frac{2 (-2+5 q)}{39 \left(1+q+q^2\right)} \\ 
     & \phantom{=1\ } + 
     \frac{2 \left(-31-41 q+82 q^2+44 q^3+90 q^4+66 q^5\right)}{301 \left(1+q+q^2+q^3+q^4+q^5+q^6\right)} \\ 
     & \phantom{=1\ } - 
     \frac{2 \left(-852+8364 q-2721 q^2-5830 q^3+17408 q^4+1596 q^5-10747 q^6+12927 q^7+5761 q^8\right)}{13975 \left(-1+q^2-q^3-q^4+q^5-q^7+q^9\right)} \\ 
\widetilde{\vartheta}_{3,4}(q) & = 
     1-\frac{2 \left(16-64 q-q^2+4 q^3\right)}{257 \left(1+q^4\right)} \\ 
     & \phantom{=1\ } + 
     \scriptstyle{\frac{\vartheta_{3,4}(q)}{\left(257 \left(1+q+q^3+q^4+q^6+q^7+q^8+q^9-q^{13}-q^{14}-q^{15}-q^{16}-q^{17}-q^{18}-q^{19}-q^{20}-q^{21}+q^{25}+q^{26}\right)\right)}}, 
\end{align*} 
where the numerator polynomial, $\vartheta_{3,4}(q)$, is defined by 
\begin{align*} 
\vartheta_{3,4}(q) & := \bigl(2 \bigl(16+209 q+192 q^2+19 q^3+454 q^4+497 q^5+84 q^6+451 q^7+525 q^8+486 q^9+626 q^{10} \\ 
     & \phantom{=2\ } + 
     323 q^{11}+507 q^{12}+526 q^{13}-64 q^{14}-17 q^{15}+309 q^{16}+33 q^{17}-405 q^{18}-195 q^{19} \\ 
     & \phantom{=2\ } - 
     264 q^{20}-502 q^{21}-305 q^{22}-322 q^{23}-254 q^{24}+4 q^{25}\bigr)\bigr). 
\end{align*} 

\begin{remark}[Convergent Function Congruences Versus Generating Functions as Series Over $q$] 
Note that while the J-fraction properties listed in the references 
\citep{FLAJOLET80B,FLAJOLET82,GFLECT,MULTIFACT-CFRACS} 
provide congruences for the coefficients of $\Conv_h(q, z)$ over $z$ 
modulo $h$ for integers $h \geq 2$, we are unable to find exact 
generating functions for the sums of squares functions, 
$r_p(n) \pmod{h}$, 
which are generated by corresponding powers series expansions over $q$ 
since $[z^n] \Conv_h(q, z) \neq q^{n^2}$ for $n \geq 2h$. 
We instead adapt our expansions from the previous few equations to 
form partial generating functions in the next corollaries by 
reducing the polynomial numerator and denominator term coefficients modulo $h$. 
This procedure follows by the approach to generating the functions, $r_p(n)$, 
recursively for $0 \leq n < h$ by the finite difference equations 
implied by the rationality of $\widetilde{\vartheta}_{3,h}(q)$ in $q$ 
for all $h \geq 2$. 
\end{remark} 

\begin{cor}[Generating Functions Enumerating $r_p(n)$ Modulo $3$] 
For $0 \leq n \leq 36$ and integers $p \geq 2$, we have that 
\begin{align*} 
r_p(n) & \equiv [q^n] \left(1+\frac{1+2 q+2 q^2+q^3}{1+q+q^2+q^3+q^4+q^5+q^6}+\frac{2+q+q^3+2 q^7+2 q^8+2 q^{11}+q^{12}}{1+2 q^9+2 q^{11}+q^{14}}\right)^{p} 
     \pmod{3}. 
\end{align*} 
\end{cor}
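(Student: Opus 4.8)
The plan is to reduce the stated congruence to a single finite power-series verification. First I would fix $h = 4$, the least value for which the exact generating-function identity established above,
\[
r_p(n) = [q^n]\bigl(1 + 2q\cdot\Conv_4(q,q^2)\bigr)^p = [q^n]\,\widetilde{\vartheta}_{3,4}(q)^p,
\]
covers the range in question, since $(2\cdot 4 - 1)^2 = 49 \geq 36$, whereas $h = 3$ only reaches $(2\cdot 3 - 1)^2 = 25$. Writing $R(q)$ for the rational function appearing on the right-hand side of the statement, it then suffices to show that $\widetilde{\vartheta}_{3,4}(q)$ and $R(q)$ have the same formal power series modulo $3$ through order $q^{36}$.

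Second, I would record the elementary reduction that makes this enough. Coefficient-wise reduction modulo $3$ is a ring homomorphism on $\mathbb{Z}[[q]]$, and the coefficient $[q^n]F(q)^p$ depends only on the coefficients $[q^m]F(q)$ with $0 \leq m \leq n$. Hence if $[q^n]\widetilde{\vartheta}_{3,4}(q) \equiv [q^n]R(q) \pmod{3}$ for all $0 \leq n \leq 36$, then $[q^n]\widetilde{\vartheta}_{3,4}(q)^p \equiv [q^n]R(q)^p \pmod{3}$ for all $0 \leq n \leq 36$ and every integer $p \geq 2$, which combined with the first step is exactly the assertion $r_p(n) \equiv [q^n]R(q)^p \pmod{3}$.

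Third, to verify the truncated congruence I would work from the closed forms rather than from the numerically decomposed expansion of $\widetilde{\vartheta}_{3,4}(q)$. Writing $\widetilde{\vartheta}_{3,4}(q) = \bigl(\ConvQ_4(q,q^2) + 2q\,\ConvP_4(q,q^2)\bigr)/\ConvQ_4(q,q^2)$ and inserting the explicit expansions from Proposition \ref{prop_DenomConvFn_Seq_Formula} and Proposition \ref{prop_NumConvFn_Seq_Formula} evaluated at $z = q^2$, I would reduce modulo $3$, so that the claim becomes the power-series congruence
\[
R(q)\cdot \ConvQ_4(q,q^2) \equiv \ConvQ_4(q,q^2) + 2q\,\ConvP_4(q,q^2) \pmod{3},
\]
to be verified through order $q^{36}$. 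Here the common denominator of $R(q)$, namely the product $(1+q+\cdots+q^6)(1+2q^9+2q^{11}+q^{14})$, is recognized as the reduction modulo $3$ of $\ConvQ_4(q,q^2)$ after cancellation of the factor it shares with the numerator over $\mathbb{F}_3$; the degree $36$ of $\ConvQ_4(q,q^2)$, which arises from its leading term $q^{36}$, is what fixes the cutoff $n \leq 36$.

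The step I expect to be the main obstacle is this last verification: carrying out the modular reduction and factorization of the degree-$36$ denominator $\ConvQ_4(q,q^2)$ over $\mathbb{F}_3$, identifying and cancelling the common factor it shares with $\ConvQ_4(q,q^2) + 2q\,\ConvP_4(q,q^2)$, and confirming that the residual proper rational function decomposes into exactly the two displayed fractions of $R(q)$, together with the constant term $1$ forced by $\widetilde{\vartheta}_{3,4}(0) = 1$. This is a finite but delicate computation, and it is also where the precise order of agreement — and hence the stated range $0 \leq n \leq 36$ rather than the \emph{a priori} range $0 \leq n \leq 49$ — must be pinned down, most cleanly by a direct check of the displayed power-series congruence coefficient by coefficient.
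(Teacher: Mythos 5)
Your reduction in the first two steps is sound, but the verification plan in your third step rests on a misidentification that would make the proposed computation fail. The rational function $R(q)$ in the corollary is not a mod-$3$ avatar of the $h=4$ convergent at all: it is the $h=3$ function $\widetilde{\vartheta}_{3,3}(q) = 1 + 2q\,\Conv_3(q,q^2)$ with its coefficients reduced modulo $3$. Concretely,
\begin{align*}
\ConvQ_3(q,q^2) \;=\; 1 - (1+q^2+q^4)q^{7} + (1+q^2+q^4)q^{14} - q^{21}
\;=\; (1-q)\left(1+q+\cdots+q^6\right)\left(1-q^{9}-q^{11}+q^{14}\right),
\end{align*}
the factor $1-q$ cancels against the numerator $\ConvQ_3(q,q^2)+2q\,\ConvP_3(q,q^2)$, and reducing the resulting two-term partial fraction decomposition over the remaining coprime factors modulo $3$ yields exactly the displayed denominators $1+q+\cdots+q^6$ and $1+2q^9+2q^{11}+q^{14}$. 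By contrast, your claim that this product arises from $\ConvQ_4(q,q^2)$ "after cancellation over $\mathbb{F}_3$" is false: reducing $\ConvQ_4(q,q^2)$ modulo $q^7-1$ gives $1+q-q^2-q^6$, which is not a scalar multiple of $\Phi_7(q)=1+q+\cdots+q^6$ over $\mathbb{F}_3$, so $\Phi_7$ does not divide $\ConvQ_4(q,q^2)$ modulo $3$ and no such cancellation exists. Worse, the exact identity you intend to confirm --- that the mod-$3$ decomposition of $\widetilde{\vartheta}_{3,4}(q)$ is precisely $R(q)$ --- is false as an identity of rational functions: by \eqref{eqn_Convhqz_finite_q-series_sum_repr}, the first error term of $\widetilde{\vartheta}_{3,3}$ against $\vartheta_3$ is $2q \cdot (-1)^{3} q^{36} \QPochhammer{q^2}{q^2}{3}\big/\bigl(\ConvQ_3 \ConvQ_4\bigr) = -2q^{37}+O(q^{38})$, so $R(q)$ deviates from $\vartheta_3(q)$ --- and hence from $\widetilde{\vartheta}_{3,4}(q)$, whose first error term sits at $q^{61}$ --- already at $q^{37}$, with discrepancy $\equiv 1 \pmod 3$. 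Your factorization-and-decomposition check would therefore come out unequal, and only the coefficient-by-coefficient comparison through $q^{36}$, which you relegate to a closing remark, would actually succeed.

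The same computation also corrects your account of the cutoff: the bound $n \leq 36$ has nothing to do with $\deg_q \ConvQ_4(q,q^2) = 36$ (a numerical coincidence), and it is not pinned down by any property of the $h=4$ convergent. It is sharp because the first error term of the \emph{$h=3$} convergent is $-2q^{37}$ and $-2 \not\equiv 0 \pmod 3$. This is also why your opening move --- discarding $h=3$ on the grounds that $(2\cdot 3-1)^2 = 25 < 36$ --- led you astray: the paper's bound $(2h-1)^2$ is conservative ("occasionally for slightly larger $n$"), the true range of agreement of $\widetilde{\vartheta}_{3,h}$ with $\vartheta_3$ being $n \leq 3h^2+3h$, which equals $36$ exactly when $h=3$. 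The paper's implicit proof is accordingly shorter than your plan: take $h$ equal to the modulus, reduce the integer-coefficient decomposition of $\widetilde{\vartheta}_{3,3}$ coefficientwise modulo $3$ (legitimate for the \emph{full} power series, since the denominators have constant term $1$, so series expansion commutes with reduction mod $3$), and then use $\widetilde{\vartheta}_{3,3}(q) = \vartheta_3(q) - 2q^{37} + O(q^{38})$ together with the fact that agreement of truncated power series is preserved under taking $p^{th}$ powers.
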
 

\begin{cor}[Generating Functions Enumerating $r_p(n)$ Modulo $4$] 
For $0 \leq n \leq 64$ and integers $p \geq 2$, we have that 
\begin{align*} 
r_p(n) & \equiv [q^n] \scriptstyle{\left(1+\frac{2 q^2}{1+q^4}+\frac{2 q+2 q^3+2 q^5+2 q^7+2 q^8+2 q^{11}+2 q^{12}+2 q^{15}+2 q^{16}+2 q^{17}+2 q^{18}+2 q^{19}+2 q^{22}}{1+q+q^3+q^4+q^6+q^7+q^8+q^9+3 q^{13}+3 q^{14}+3 q^{15}+3 q^{16}+3 q^{17}+3 q^{18}+3 q^{19}+3 q^{20}+3 q^{21}+q^{25}+q^{26}}\right)^{p}}  
     \pmod{4}. 
\end{align*} 
\end{cor}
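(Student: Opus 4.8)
The plan is to reduce the stated congruence to an exact (non-modular) generating-function identity valid on the range $0 \le n \le 64$, the fact that coefficientwise reduction modulo $4$ is a ring homomorphism of formal power series, and one finite polynomial check modulo $4$.

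First I would record the exact identity. By the strengthened form of Theorem \ref{theorem_main_theorem} recorded in Remark \ref{remark_StrongerStmts_OfThe_Theorem}, we have $[z^n]\Conv_4(q,z) = q^{n^2}$ for all $0 \le n < 8$. Specializing $z \mapsto q^2$ sends the $z^n$-term to $q^{n^2+2n} = q^{n(n+2)}$, so that $\widetilde{\vartheta}_{3,4}(q) = 1 + 2q\,\Conv_4(q,q^2)$ reproduces, coefficient by coefficient in $q$, every theta term $2q^{m^2}$ of $\vartheta_3(q) = 1 + 2\sum_{m \ge 1} q^{m^2}$ with $1 \le m \le 8$, hence agrees with $\vartheta_3(q)$ through degree $q^{64}$. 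The first possible discrepancy arises from the coefficient $\widetilde{e}_{4,8}(q) := [z^8]\Conv_4(q,z)$, which contributes $2q^{17}\,\widetilde{e}_{4,8}(q)$ to $\widetilde{\vartheta}_{3,4}(q)$; using the recurrence \eqref{eqn_ConvFn_PhzQhz_rdefs} (or the explicit $\ConvP_4/\ConvQ_4$ from Tables \ref{table_ConvFnPhz_spcases} and \ref{table_ConvFnQhz_spcases}) I would verify that $\widetilde{e}_{4,8}(q)$ has $q$-adic valuation at least $48$, so that this contribution only affects coefficients of $q^n$ with $n \ge 65$. It follows that $\widetilde{\vartheta}_{3,4}(q)^p \equiv \vartheta_3(q)^p \pmod{q^{65}}$, and therefore $r_p(n) = [q^n]\,\widetilde{\vartheta}_{3,4}(q)^p$ holds exactly for every $0 \le n \le 64$ and every $p \ge 2$. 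This step is where the ``occasionally for slightly larger $n$'' phenomenon is made precise, and I expect the valuation bookkeeping to be the main obstacle.

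Next I would pass to the reduction modulo $4$. Writing $\widetilde{\vartheta}_{3,4}(q) = \bigl(\ConvQ_4(q,q^2) + 2q\,\ConvP_4(q,q^2)\bigr)/\ConvQ_4(q,q^2)$ as a quotient of integer polynomials whose denominator has constant term $1$, it is a well-defined element of $\mathbb{Z}[[q]]$. The coefficientwise reduction $\pi \colon \mathbb{Z}[[q]] \to (\mathbb{Z}/4\mathbb{Z})[[q]]$ is a ring homomorphism, and since $\ConvQ_4(q,q^2)$ reduces to a unit of $(\mathbb{Z}/4\mathbb{Z})[[q]]$, the operations of inversion and of raising to the $p$th power both commute with $\pi$. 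Consequently $[q^n]\bigl(\pi(\widetilde{\vartheta}_{3,4})\bigr)^p = \pi\bigl([q^n]\,\widetilde{\vartheta}_{3,4}(q)^p\bigr)$, so the residue of $r_p(n)$ modulo $4$ is exactly the $q^n$-coefficient of the $p$th power of the reduced base function for all $0 \le n \le 64$; the rationality of this reduced base (with unit denominator) is what supplies the finite-difference recursion alluded to in the remark preceding the corollary.

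It remains to identify the reduced base $\pi(\widetilde{\vartheta}_{3,4}(q))$ with the explicit expression in the statement. I would verify the single rational congruence
\[
1 + \frac{2q^2}{1+q^4} + \frac{2q+2q^3+\cdots+2q^{22}}{1+q+q^3+q^4+\cdots+q^{25}+q^{26}} \equiv \widetilde{\vartheta}_{3,4}(q) \pmod{4}
\]
by clearing denominators and checking the resulting polynomial congruence modulo $4$; the two denominators displayed are precisely the reductions of the factors $1+q^4$ and the degree-$26$ factor in the $h=4$ row of Table \ref{table_ConvFnConvhq2z_spcases}, with each coefficient $-1$ replaced by $3$. Because both sides are genuine rational functions over $\mathbb{Z}/4\mathbb{Z}$ with unit denominators, agreement as rational functions forces agreement of all coefficients, and raising to the $p$th power and combining with the two previous paragraphs then yields the claimed congruence on $0 \le n \le 64$. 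The homomorphism argument and the final cross-multiplication are routine; the only genuinely delicate point is the degree-$64$ range in the first step.
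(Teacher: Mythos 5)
Your strategy is the one the paper implicitly follows (exact agreement of $\widetilde{\vartheta}_{3,4}(q)^p$ with $\vartheta_3(q)^p$ through the stated range, then coefficientwise reduction modulo $4$, which is unproblematic since the denominators are units; your identification of the displayed expression with the mod-$4$ reduction of the $h=4$ entry of Table \ref{table_ConvFnConvhq2z_spcases} is also correct). The genuine gap sits exactly at the point you flagged as delicate: the valuation claim is false. By the paper's own formula \eqref{eqn_Convhqz_finite_q-series_sum_repr}, only the $i=5$ term of $J_{\infty}^{[\sq]}(q,z)-\Conv_4(q,z)$ reaches degree $8$ in $z$, and since $\ConvQ_h(q,0)=1$ this gives
\[
\widetilde{e}_{4,8}(q) \;=\; [z^8]\,\Conv_4(q,z) \;=\; q^{64} - q^{44}\,\QPochhammer{q^2}{q^2}{4},
\]
whose $q$-adic valuation is $44$, not at least $48$. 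Hence the contribution $2q^{17}\widetilde{e}_{4,8}(q) = 2q^{81} - 2q^{61}\QPochhammer{q^2}{q^2}{4}$ pollutes degrees $61$ and $63$ (recall $\QPochhammer{q^2}{q^2}{4} = 1 - q^2 - q^4 + 2q^{10} - \cdots$), so the exact identity $r_p(n) = [q^n]\,\widetilde{\vartheta}_{3,4}(q)^p$ holds only for $0 \leq n \leq 60$, and the first step of your proposal fails on $61 \leq n \leq 64$.

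Moreover, the failure cannot be repaired for all $p$, because the corollary as stated is false for odd $p$. Writing $\widetilde{\vartheta}_{3,4} = \vartheta_3 + \delta$ with $\delta = -2q^{61}\QPochhammer{q^2}{q^2}{4} + O(q^{70})$, one has $\widetilde{\vartheta}_{3,4}^{\,p} - \vartheta_3^{\,p} = p\,\vartheta_3^{\,p-1}\delta + O(q^{122})$, so the coefficient of $q^{61}$ in the difference equals $-2p$ and that of $q^{63}$ is $\equiv 2p \pmod{4}$. When $p$ is even these vanish modulo $4$ and the congruence on $0 \leq n \leq 64$ survives (this parity argument is the missing ingredient your proposal would need); when $p$ is odd the congruence fails at $n = 61$ and $n = 63$ by exactly $2 \pmod{4}$. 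A correct write-up must therefore either restrict the range to $0 \leq n \leq 60$ for all $p \geq 2$, or keep the range $0 \leq n \leq 64$ and restrict to even $p$; your proposal, which commits to verifying a valuation bound of $48$ that is in fact $44$, would collapse at that verification step.
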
 

\begin{remark}[Generating Functions Enumerating the Sum of Divsors Function] 
Since $r_4(2k+1) = 8 \cdot \sigma_1(2k+1)$ for all integers $k \geq 0$, 
we may employ the first infinite $q$-series expansion given in this 
section to enumerate the \emph{sum of divisors function}, 
$\sigma_1(n) = \sum_{d | n} d$, 
over odd positive integers $n$ as follows: 
\begin{align*} 
\sum_{k=0}^{\infty} \sigma_1(2k+1) q^{2k+1} & = 
     \frac{1}{16} \cdot \left(\vartheta_3(q)^4 - \vartheta_3(-q)^4\right) \\ 
     & = 
     \sum_{b = \pm 1} \frac{b}{16}\left( 
     1 + 2bq \times \sum_{i=1}^{\infty} \frac{(-1)^{i-1} (bq)^{3i(i-1)} 
     \QPochhammer{q^2}{q^2}{i-1}}{\sum\limits_{0 \leq j \leq n < 2i} 
     \QBinomial{i}{j}{q^2} \QBinomial{i-1}{n-j}{q^2} q^{2j} 
     (-b q^{2i-1})^n} \right)^{4}. 
\end{align*} 
We can also use the square series generating function results for 
$J_{\infty}^{[\sq]}(q, z) \equiv \Conv_{\infty}(q, z)$ from 
Remark \ref{remark_Convhqz_recs_and_finite_sums} and 
Proposition \ref{prop_InfiniteGeomSqSeries_qSeries_repr} 
to prove new $q$-series expansions for the infinite $q$-Pochhammer product, 
$\QPochhammer{q}{q}{\infty}$, which generates the sum of divisors function as 
\citep[\S 26.10(iii)]{NISTHB} 
\begin{align*} 
\sum_{n \geq 1} \sigma_1(n) q^n & = -q \cdot d / dq \left[ 
      \Log\QPochhammer{q}{q}{\infty}\right]. 
\end{align*} 
Then we expand $\QPochhammer{q}{q}{\infty}$ in the generating function 
for $\sigma_1(n)$ in the previous equation by the 
unilateral series given by 
\begin{align*} 
\QPochhammer{q}{q}{\infty} & = 1 - \sum_{n=0}^{\infty} (-1)^n \left( 
     q^{(n+1)(3n+2)/2} + q^{(n+1)(3n+4)/2} \right) \\ 
     & = 
     1 - q \times \sum_{n=0}^{\infty} (-q^{5/2})^n q^{3 n^2 / 2} - 
     q^2 \times \sum_{n=0}^{\infty} (-q^{7/2})^n q^{3 n^2 / 2} \\ 
     & = 
     1 - q \times \sum_{i=1}^{\infty} 
     \frac{(-1)^{i-1} q^{(9i-2)(i-1)/2} \QPochhammer{q^3}{q^3}{i-1}}{ 
     \sum\limits_{0 \leq j \leq n < 2i} \QBinomial{i}{j}{q^3} 
     \QBinomial{i-1}{n-j}{q^3} q^{3j} \cdot q^{(3i-2) n}} \\ 
     & \phantom{=1\ } - 
     q^2 \times \sum_{i=1}^{\infty} 
     \frac{(-1)^{i-1} q^{(9i+2)(i-1)/2} \QPochhammer{q^3}{q^3}{i-1}}{ 
     \sum\limits_{0 \leq j \leq n < 2i} \QBinomial{i}{j}{q^3} 
     \QBinomial{i-1}{n-j}{q^3} q^{3j} \cdot q^{(3i-1) n}}. 
\end{align*} 
Similarly, the \emph{partition function}, $p(n)$, is generated by the 
$q$-series expansion of the reciprocal of the previous series for 
$\QPochhammer{q}{q}{\infty}$. 
Other applications of the new infinite $q$-series provided by the 
limiting cases of these sums for the $h^{th}$ convergent functions are 
given in the next section. 
\end{remark} 

\subsection{New Infinite Series for Special Theta Functions} 
\label{subSection_JThetaFns_Newq-Series_reprs}

The \emph{Jacobi theta functions}, denoted by 
$\vartheta_i(u, q)$ for $i = 1,2,3,4$, or by 
$\vartheta_i\left(u \vert \tau\right)$ when the 
\emph{nome} $q \equiv \exp\left(\imath\pi\tau\right)$ 
satisfies $\Im(\tau) > 0$, 
form another class of square series expansions of 
special interest in the applications considered in \citep{SQSERIESMDS}. 
The classical forms of these theta functions satisfy the 
respective \emph{bilateral} and corresponding \emph{asymmetric}, \emph{unilateral} 
\emph{Fourier--type} square series expansions given by 
\citep[\S 20.2(i)]{NISTHB} 
\begin{align} 
\label{eqn_JThetaFn_series_defs}
\vartheta_1(u, q) & = 
     \sum_{n=-\infty}^{\infty} 
     q^{\left(n+\frac{1}{2}\right)^2} (-1)^{n-1/2} e^{(2n+1) \imath u} && = 
     2 q^{1/4} \sum_{n=0}^{\infty} q^{n(n+1)} (-1)^{n} 
     \sin\left((2n+1) u\right) \\ 
\notag 
\vartheta_2(u, q) & = 
     \sum_{n=-\infty}^{\infty} 
     q^{\left(n+\frac{1}{2}\right)^2} e^{(2n+1) \imath u} && = 
     2 q^{1/4} \sum_{n=0}^{\infty} q^{n(n+1)} 
     \cos\left((2n+1) u\right) \\ 
\notag 
\vartheta_3(u, q) & = 
     \sum_{n=-\infty}^{\infty} 
     q^{n^2} e^{2n \imath u} && = 
     1 + 2 \sum_{n=1}^{\infty} q^{n^2} \cos\left(2n u\right) \\ 
\notag 
\vartheta_4(u, q) & = 
     \sum_{n=-\infty}^{\infty} 
     q^{n^2} (-1)^{n} e^{2n \imath u} && = 
     1 + 2 \sum_{n=1}^{\infty} q^{n^2} (-1)^{n} \cos\left(2n u\right). 
\end{align} 

\begin{cor}[New $q$-Series Expansions of the Jacobi Theta Functions] 
\label{cor_Newq-SeriesExps_JThetaFns} 
For $u \in \mathbb{R}$ and fixed $q \in \mathbb{C}$ such that 
$0 < |q| < 1$, we have new $q$-series representations of the 
Fourier series for the Jacobi theta functions expanded in the 
following forms: 
\begin{align*} 
\vartheta_1(u, q) & = 
     \sum_{b=\pm 1} \frac{q^{1/4} e^{\imath b u}}{b\imath} \times \left[ 
     \sum_{i=1}^{\infty} 
     \frac{(-1)^{i-1} q^{(3i-4)(i-1)} \QPochhammer{q^2}{q^2}{i-1} 
     \left(-e^{2\imath bu} q\right)^{2i-2}}{\sum\limits_{0 \leq j \leq n < 2i} 
     \QBinomial{i}{j}{q^2} \QBinomial{i-1}{n-j}{q^2} q^{2j} \cdot 
     \left(q^{i-1} e^{\imath bu}\right)^{2n}} 
     \right] \\ 
\vartheta_2(u, q) & = 
     \sum_{b=\pm 1} q^{1/4} e^{\imath b u} \times \left[ 
     \sum_{i=1}^{\infty} 
     \frac{(-1)^{i-1} q^{(3i-4)(i-1)} \QPochhammer{q^2}{q^2}{i-1} 
     \left(q e^{2\imath bu}\right)^{2i-2}}{\sum\limits_{0 \leq j \leq n < 2i} 
     \QBinomial{i}{j}{q^2} \QBinomial{i-1}{n-j}{q^2} q^{2j} \cdot 
     \left(q^{i-1} e^{\imath bu}\right)^{2n}} 
     \right] \\ 
\vartheta_3(u, q) & = 
     1 + \sum_{b=\pm 1} q e^{2\imath b u} \times \left[ 
     \sum_{i=1}^{\infty} 
     \frac{(-1)^{i-1} q^{(3i-4)(i-1)} \QPochhammer{q^2}{q^2}{i-1} 
     \left(q e^{\imath bu}\right)^{4i-4}}{\sum\limits_{0 \leq j \leq n < 2i} 
     \QBinomial{i}{j}{q^2} \QBinomial{i-1}{n-j}{q^2} q^{2j} \cdot 
     \left(-q^{2i-1} e^{2\imath bu}\right)^{n}} 
     \right] \\ 
\vartheta_4(u, q) & = 
     1 - \sum_{b=\pm 1} q e^{2\imath b u} \times \left[ 
     \sum_{i=1}^{\infty} 
     \frac{(-1)^{i-1} q^{(3i-4)(i-1)} \QPochhammer{q^2}{q^2}{i-1} 
     \left(-q e^{\imath bu}\right)^{4i-4}}{\sum\limits_{0 \leq j \leq n < 2i} 
     \QBinomial{i}{j}{q^2} \QBinomial{i-1}{n-j}{q^2} q^{2j} \cdot 
     \left(q^{2i-1} e^{2\imath bu}\right)^{n}} 
     \right]. 
\end{align*} 
\end{cor}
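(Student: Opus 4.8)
The plan is to obtain each of the four identities by specializing the master series of Proposition \ref{prop_InfiniteGeomSqSeries_qSeries_repr} at a suitable argument $z = z(u, q, b)$, $b = \pm 1$, after first rewriting the Fourier-type expansions in \eqref{eqn_JThetaFn_series_defs} in terms of the ordinary geometric square series $J_{\infty}^{[\sq]}(q, z) = \sum_{n\geq 0} q^{n^2} z^n$. Two elementary index manipulations drive all four cases: for any admissible $w$,
\[
\sum_{n\geq 0} q^{n(n+1)} w^n = J_{\infty}^{[\sq]}(q, qw), \qquad \sum_{n\geq 1} q^{n^2} w^n = qw\, J_{\infty}^{[\sq]}(q, q^2 w),
\]
the first by absorbing $q^n$ into the series argument and the second by the substitution $n \mapsto n+1$. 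The essential point is that these shifts strictly lower the modulus of the effective argument: the naive choice $w = e^{2\imath bu}$ sits on the unit circle, where Proposition \ref{prop_InfiniteGeomSqSeries_qSeries_repr} does not apply, whereas the shifted arguments $qw = q e^{2\imath bu}$ and $q^2 w = q^2 e^{2\imath bu}$ have moduli $|q| < 1$ and $|q|^2 < 1$ for real $u$, so the hypothesis $0 < |q|, |z| < 1$ is satisfied.

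With this reduction in place, I would handle the four functions in two pairs. For $\vartheta_3$ and $\vartheta_4$ I split the bilateral series $\sum_{n} q^{n^2} \epsilon^n e^{2n\imath u}$, with $\epsilon = 1$ for $\vartheta_3$ and $\epsilon = -1$ for $\vartheta_4$, into its $n = 0$ term and its two tails over $n \geq 1$, which I index by $b = \pm 1$ through $e^{\pm 2n\imath u}$; the second identity above with $w = \epsilon\, e^{2\imath bu}$ then produces the arguments $z = q^2 e^{2\imath bu}$ and $z = -q^2 e^{2\imath bu}$, respectively. For $\vartheta_1$ and $\vartheta_2$ I use the asymmetric unilateral forms on the right of \eqref{eqn_JThetaFn_series_defs}, expand $\sin$ and $\cos$ as $\frac{1}{2\imath}\sum_{b} b\, e^{\imath(2n+1)bu}$ and $\frac{1}{2}\sum_{b} e^{\imath(2n+1)bu}$, factor out $q^{1/4} e^{\imath bu}$, and apply the first identity above with the $(-1)^n$ coming from the $\vartheta_1$ series absorbed into $w$; this lands on $z = -q e^{2\imath bu}$ for $\vartheta_1$ and the companion argument for $\vartheta_2$.

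What remains is mechanical verification: I would substitute each value of $z$ into the numerator $(-1)^{i-1} q^{(3i-4)(i-1)} \QPochhammer{q^2}{q^2}{i-1} z^{2i-2}$ and the denominator summand $(-q^{2i-3} z)^n$ of Proposition \ref{prop_InfiniteGeomSqSeries_qSeries_repr}, then regroup the powers of $q$ and $e^{\imath bu}$ to match the stated right-hand sides. I expect the main difficulty to be sign and phase bookkeeping rather than anything structural. Since $\vartheta_3, \vartheta_4$ (and likewise $\vartheta_1, \vartheta_2$) differ only by a factor $(-1)^n$ in their defining series, the two members of each pair are distinguished solely by whether $z$ carries a minus sign, and one must track carefully how that sign behaves differently in the numerator, where it is raised to the even power $2i - 2$ and disappears, and in the denominator, where it survives inside $(-q^{2i-3} z)^n$ as an extra $(-1)^n$. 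Finally, I would invoke the uniform convergence established in Remark \ref{remark_StrongerStmts_OfThe_Theorem} to justify splitting each bilateral sum into its $b = \pm 1$ pieces and manipulating the resulting series termwise for all real $u$ and all $0 < |q| < 1$.
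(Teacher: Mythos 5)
Your proposal is correct and takes essentially the same approach as the paper's own (much terser) proof: expand the sines and cosines into exponentials indexed by $b = \pm 1$, shift indices so that each theta series becomes $J_{\infty}^{[\sq]}(q, z)$ evaluated at the arguments $z = -q e^{2\imath bu}$, $q e^{2\imath bu}$, $\pm q^2 e^{2\imath bu}$, and then specialize Proposition \ref{prop_InfiniteGeomSqSeries_qSeries_repr}. Your sign bookkeeping is, if anything, more careful than the paper's: carried to completion it shows that the $\vartheta_2$ denominator should contain $\left(-q^{2i-2} e^{2\imath bu}\right)^n$ rather than the stated $\left(q^{i-1} e^{\imath bu}\right)^{2n}$, since for $\vartheta_2$ there is no $(-1)^n$ in the theta series to cancel the sign coming from $\left(-q^{2i-3} z\right)^n$.
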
 
\begin{proof} 
We can expand the sine and cosine functions by exponentials thus 
reducing the Fourier series terms in \eqref{eqn_JThetaFn_series_defs} 
of the form $\cos((an+b)u)$ and $\sin((an+b)u)$ to $n^{th}$ powers of the 
exponential function as 
\begin{align*} 
\cos((an+b)u) & = \frac{1}{2}\left( 
     e^{(an+b) \imath u} + e^{-(an+b) \imath u}\right) \\ 
\sin((an+b)u) & = \frac{1}{2\imath}\left( 
     e^{(an+b) \imath u} - e^{-(an+b) \imath u}\right) 
\end{align*} 
Each of these series then follows as a special case of the new infinite 
series representations proved by 
Proposition \ref{prop_InfiniteGeomSqSeries_qSeries_repr}. 
\end{proof} 

Additional square series expansions related to these forms are derived 
from the special cases of the Jacobi theta functions 
defined by $\vartheta_i(q) \equiv \vartheta_i(0, q)$, and by 
$\vartheta_i^{(j)}(u, q) \equiv 
 \partial^{(j)}{\vartheta_i(u_0, q)} / \partial{u_0}^{(j)} \vert_{u_0=u}$ 
or by $\vartheta_i^{(j)}(q) \equiv \vartheta_i^{(j)}(0, q)$ 
for the higher--order $j^{th}$ derivatives taken over 
any fixed $j \in \mathbb{Z}^{+}$. 
For example, we can employ the result for the first-order case in 
Corollary \ref{cor_InfiniteGeomSqSeries_qSeries_repr_anpb_derivs} to 
expand new $q$-series representations for the special case series given by 
\begin{align*} 
\vartheta_1^{\prime}(q) & = 2 q^{1/4} \times \sum_{n=0}^{\infty} 
     (2n+1) q^{n^2} (-q)^n && = 
     2 q^{1/4} \cdot \vartheta_{0,1}(2, 1; q, -q) \\ 
(q)_{\infty}^{3} & = 1 - q \times \sum_{n=0}^{\infty} 
     (-1)^n (2n+3) q^{n(n+3)/2} && = 
     1 - q \cdot \vartheta_{0,1}\left(2, 3; \sqrt{q}, q^{3/2}\right). 
\end{align*} 
We can also form integral representations over the variant forms of the 
Jacobi theta functions given by 
$\vartheta_i(0 | \imath x^2) \equiv \vartheta_i\left(e^{-\pi x^2}\right)$ 
as considered in the examples from \citep{SQSERIESMDS}. 
For example, when $\Re(s) > 2$, we have that \citep[\S 20.10(i)]{NISTHB} 
\begin{align*} 
\frac{(2^s - 1)}{\sqrt{\pi^s}} & \Gamma\left(\frac{s}{2}\right) \zeta(s) = 
     \int_0^{\infty} x^{s-1} \vartheta_2(0, \imath x^2) dx \\ 
     & = 
     \sum_{i=1}^{\infty} \int_0^{\infty} \left[ 
     \frac{2 x^{s-1} e^{-\pi x^2 / 4} (-1)^{i-1} e^{-\pi x^2 (3i-4)(i-1)} 
     \QPochhammer{e^{-2\pi x^2}}{e^{-2\pi x^2}}{i-1} 
     e^{-2\pi x^2 (i-1)}}{\sum\limits_{0 \leq j \leq n < 2i} 
     \QBinomial{i}{j}{e^{-2\pi x^2}} \QBinomial{i-1}{n-j}{e^{-2\pi x^2}} 
     \times e^{-2\pi x^2 \cdot j} \left(-e^{2\pi x^2 (i-1)}\right)^n} 
     \right]\ dx. 
\end{align*} 

\begin{remark}[The General Two-Variable Ramanujan Theta Function] 
For any non-zero $a, b \in \mathbb{C}$ such that $|ab| < 1$, the 
\emph{general two-variable Ramanujan theta function}, $f(a, b)$, is 
expanded by the unilateral series representations 
\begin{align*} 
f(a, b) & = 1 + \sum_{n=1}^{\infty} \left[ 
     a^{n(n+1)/2} b^{n(n-1)/2} + a^{n(n-1)/2} b^{n(n+1)/2} 
     \right] \\ 
     & = 
     1 + a \cdot J_{\infty}^{[\sq]}\left(\sqrt{ab}, a \sqrt{ab}\right) + 
         b \cdot J_{\infty}^{[\sq]}\left(\sqrt{ab}, b \sqrt{ab}\right) \\ 
     & = 
     1 + \sum_{c \in \{a, b\}} \sum_{i=1}^{\infty} 
     \frac{c \cdot (-1)^{i-1} (ab)^{(3i-2)(i-1)} 
     \QPochhammer{ab}{ab}{i-1} c^{2i-2}}{\sum\limits_{0 \leq j \leq n < 2i} 
     \QBinomial{i}{j}{ab} \QBinomial{i-1}{n-j}{ab} (ab)^j \cdot 
     \left(-(ab)^{i-1} \cdot c\right)^{n}}, 
\end{align*} 
which then implies the corresponding new series representations for its 
special case representations of other special functions. 
In particular, we have notable special cases of the series in the 
last equation given by the infinite $q$-Pochhammer product 
$(q)_{\infty} \equiv f(q) := f(-q, -q^2)$, including an immediate 
application to expanding new series for the 
\emph{Dedekind eta function}, $\eta(\tau)$, and the special cases of 
\emph{Ramanujan's functions}, $\varphi(q) \equiv f(q, q)$ and 
$\psi(q) \equiv f(q, q^3)$. 

Explicit special constant values forming real-valued multiples of 
rational inputs to reciprocals of the gamma function are generated by the 
expansions of the inputs of $q := e^{-k\pi}$ to Ramanujan's functions, 
$\varphi(q) \equiv \vartheta_3(q)$ and 
$\psi(q) \equiv 1 / 2 q^{-1/8} \cdot \vartheta_2\left(\sqrt{q}\right)$, 
for the positive real-valued $k$. 
The special case series and their corresponding square series 
integral representations from \citep{SQSERIESMDS} expanded in 
\eqref{eqn_PairOfExamples_ExplicitSpConstantValues_JThetaFnSpCases} of the 
introduction are only two of a number of special case pairs of these 
special-function-defined constant forms for the particular real-valued 
$k \in \{1,2,3,5\} \bigcup \{1,2,\frac{1}{2}\} \bigcup 
       \{\sqrt{2}, \sqrt{3}, \sqrt{6} \} \bigcup \mathbb{Z}^{+}$ 
cited as examples in the original square series transformations article. 
\end{remark} 

\section{Conclusions} 

\subsection{Summary} 

We arrived at formulas for the ``\emph{square series}'' 
subsequences, $c_h^{[\sq]}(q)$ and $\ab_h^{[\sq]}(q)$, 
implicit to the definition of the infinite J-fraction expansion in 
\eqref{eqn_J-Fraction_Expansions} and in 
\eqref{eqn_SquareSeries_J-Fraction_Expansions} by computation rather than the 
combinatorial intuition that motivates many of the 
J-fraction sequence examples cited in the references 
\citep{FLAJOLET80B,FLAJOLET82}. 
In Section \ref{Section_Proofs}, the 
J-fraction series expanded by the ratio of the $h^{th}$ 
convergent functions, $\ConvP_h(q, z)$ and $\ConvQ_h(q, z)$, is proved to 
generate the ordinary geometric square series up to order $h+1$ (or order $2h$) for 
each finite $h \geq 1$ with the limiting case as $h \rightarrow \infty$ 
converging exactly to the ordinary square series generating function, 
$J_{\infty}^{[\sq]}(q, z)$. 
Moreover, we are able to generate higher-order square series expansions by 
differentiating the limiting convergent function termwise with respect to $z$. 

We use known properties of the convergent expansions of more general 
continued fraction representations to prove a new finite sum 
representation of $\Conv_h(q, z)$ given in 
\eqref{eqn_Convhqz_finite_q-series_sum_repr} 
for each $h \geq 1$. 
Since the limit of $\Conv_h(q, z)$ over $h$ exists as $h$ tends to infinity, 
these new finite sum results imply new infinite $q$-series representations 
of both of the ordinary square series cases of $J_{\infty}^{[\sq]}(q, z)$ and 
$J_{\infty}^{[\sq]}\left(q^m, c \cdot q^{p}\right)$ proved in 
Section \ref{subSection_Proofs_ConsequencesOfTheMainTheorem}. The latter ordinary 
square series variant leads to new generating functions for the 
special function sequences, $r_p(n)$ and $\sigma_1(n)$, as well as new 
strictly $q$-series representations of the series for the 
Jacobi theta functions, particular notable special cases of the 
Jacobi theta functions, and for the 
general two-variable Ramanujan theta function given in 
Section \ref{Section_Applications}. 

\subsection{Remarks on Computationally Conjecturing New J-Fraction Expansions} 
\label{subSection_Conclusions_Remarks} 

\begin{figure}[ht] 
\centering
\begin{minipage}[t]{0.91\textwidth} 
\begin{Verbatim}[frame=single,numbers=left]  
Clear[c, ab, Phz, Qhz, Conv]
Phz[h_, z_] := Phz[h, z] = 
     If[(h <= 1), KroneckerDelta[h == 1, True], 
                  (1 - c[h]*z)*Phz[h - 1, z] - ab[h]*(z^2)*Phz[h - 2, z]]
Qhz[h_, z_] := Qhz[h, z] = 
     If[(h <= 1), KroneckerDelta[h == 0, True] + 
                  KroneckerDelta[h == 1, True] * (1 - c[1] * z), 
                  (1 - c[h]*z)*Qhz[h - 1, z] - ab[h]*(z^2)*Qhz[h - 2, z]]
Conv[h_, z_] := FS[Phz[h, z]/Qhz[h, z]] 

getSubsequenceValues[upper_, fnq_] := Module[{eqns, vars, cfsols}, 
     eqns = Table[SeriesCoefficient[Conv[upper, z], {z, 0, n}] == 
                  FunctionExpand[fnq[n]], {n, 1, upper}]; 
     vars = Flatten[Table[{c[i], ab[i + 2]}, {i, 0, upper}]];
     cfsols = Solve[eqns, vars][[1]] // Expand // FullSimplify;
     Return[Map[#1[cfsols]&, {FullSimplify, Factor, Apart}]]; 
]; 

fnqFunctions = { 
     Power[q, #^2]&, 
     (Power[q, #^2] / Factorial[#])&
     Power[q, #^3]&, 
     QPochhammer[q, q, #]&, 
     QPochhammer[a, q, #]&, 
     (1 / QPochhammer[q, q, #])&, 
     QPochhammer[z * q^(-n), q, #]&, 
     (1 / QPochhammer[z * q^(-n), q, #])&, 
     Power[q, (#-1)#/2]/QPochhammer[q, q, #]&, 
     QPochhammer[a, q, #] / QPochhammer[q, q, #]&
}; 

Table[{Function->fnq[n], getSubSequenceValues[7, fnq]}, 
      {fnq, fnqFunctions}] // TableForm
\end{Verbatim} 
\end{minipage} 

\caption{Mathematica Code for Conjecturing New $q$-Series-Related 
         J-Fraction Expansions} 
\label{figure_MmCode_ConjNewJFractionExpSeqs} 

\end{figure} 

The listing given in Figure \ref{figure_MmCode_ConjNewJFractionExpSeqs} 
provides a shortened version of the 
working \emph{Mathematica} code we used to first computationally 
conjecture the implicit subsequences involved in our new square series 
J-fraction expansions in \eqref{eqn_SquareSeries_J-Fraction_Expansions}. 
Despite some trial and error with the method illustrated in the figure for 
conjecturing the new J-fraction expansion results, we do not seem to have 
correspondingly simple continued fraction representations for the 
exponential square series case where $f_n(q) := q^{n^2} / n!$, which is 
studied in more detail in the original 
square series transformations article \citep{SQSERIESMDS}. 
The computational methods we used to compute, and subsequently ``guess``, the 
first few explicit 
values of the J-fraction component sequences are easily extended, 
as demonstrated by the example functions in the last table generated in the 
source code from the figure. 

\begin{table}[ht] 
\begin{tabular}{||c||c|l||c|l||} \hline 
$f_n(q)$ & $c_1(q)$ & $c_h(q)$ ($h \geq 2$) & 
           $\ab_h(q)$ ($h \geq 2$) \\ \hline\hline 
$\QPochhammer{a}{q}{n}$ & $1-a$ & $q^{h-1} - a q^{h-2} \left(q^{h} + q^{h-1} - 1\right)$ & 
                          $a q^{2h-4} (a q^{h-2}-1)(q^{h-1}-1)$ \\ 
$\frac{1}{\QPochhammer{q}{q}{n}}$ & $\frac{1}{1-q}$ & 
     $\frac{q^{h-1} \left(q^{h-1} \QBinomial{h-1}{1}{q} - 
      \QBinomial{h-2}{1}{q}\right)}{ 
      \QBinomial{2h-3}{1}{q} (q^{2h-1}-1)}$ & 
     $-\frac{q^{3h-5}}{(q^{2h-3}-1)^2 (1 + q^{h-2} + q^{h-1} + q^{2h-3})}$ \\ 
$\QPochhammer{z q^{-n}}{q}{n}$ & $\frac{q-z}{q}$ & 
     $\frac{q^h - z - qz + q^h z}{q^{2h-1}}$ & 
     $\frac{(q^{h-1}-1) (q^{h-1}-z) \cdot z}{q^{4h-5}}$ \\ 
$\frac{1}{\QPochhammer{z q^{-n}}{q}{n}}$ & $\frac{q}{q-z}$ & 
     $\frac{q^{h-1} \left(q^{2h-2} + z + q^{h-1} z - q^{h} z\right)}{ 
      (q^{2h-3}-z) (q^{2h-1}-z)}$ & 
     $\QBinomial{h-1}{1}{q} \cdot \frac{q^{3h-4}(1-q)(q^{h-2}-z) \cdot z}{ 
      (q^{2h-4}-z) (q^{2h-3}-z)^2 (q^{2h-2}-z)}$ \\ 
\hline\hline 
\end{tabular} 

\bigskip 
\caption{Conjectures for Sequence Formulas in $q$-Series-Related 
         J-Fraction Expansions} 
\label{table_Conclusions_SpecialJ-Fraction_ConjSubSeq_Results} 

\end{table} 

More precisely, if we let $\{ f_n(q) \}_{n=0}^{\infty}$ 
denote an arbitrary sequence depending on the fixed parameter $q$, we 
seek to explore the analogous $q$-series-related J-fraction expansions 
enumerating this sequence in the form of 
\begin{align*} 
J_{\infty}(f; q, z) & := \sum_{n=0}^{\infty} f_n(q) \cdot z^n. 
\end{align*} 
We can always solve for the sequences of implicit coefficients, 
$\{c_i\}$ and $\{\ab_i\}$, in \eqref{eqn_J-Fraction_Expansions} 
which (at least formally) generate the sequence of $f_n(q)$ over $z$. 
Typically we find that for more well-known $q$-series generating function forms 
it is easy to computationally ``\emph{guess}'' exact formulas for these 
sequences, which are each implicit functions of the $f_i(q)$'s, 
in many special case forms. 

\subsubsection*{Conjectures on the J-Fraction Expansions of Related Series} 

We conjecture, but do not offer conclusive proofs, that 
we have related $q$-series generating functions enumerating variants of the 
sequences, $\QPochhammer{a}{q}{n}$, $\QPochhammer{q}{q}{n}$, and 
$1 / \QPochhammer{q}{q}{n}$ over $z$ defined by the 
implicit subsequence listings given in 
Table \ref{table_Conclusions_SpecialJ-Fraction_ConjSubSeq_Results}. 
Examples of special series that result by generating the sequences, 
$f_n(q)$, in the table by the properties resulting from our conjectured 
J-fraction expansions include the following expansions 
\citep[\S 17]{NISTHB} \citep[\S 1.3.5]{SQSERIESMDS}: 
\begin{align*} 
\sum_{n=0}^{\infty} \QPochhammer{z}{q}{n+1} z^n & = 
     1 + \sum_{n=1}^{\infty} (-1)^n \left[ 
     q^{n(3n-1)/2} z^{3n-1} + q^{n(3n+1)/2} z^{3n} 
     \right] \\ 
\tag{$q$-Exponential Function} 
e_q(z) & = \sum_{n=0}^{\infty} \frac{(1-q)^n z^n}{\QPochhammer{q}{q}{n}} \\ 
\tag{$q$-Hypergeometric Function} 
_1\phi_0(0; -; q, z) & = \sum_{n=0}^{\infty} 
     \frac{z^n}{\QPochhammer{q}{q}{n}} = 
     \frac{1}{\QPochhammer{z}{q}{\infty}} \\ 
\tag{Partition Function Generating Function} 
_1\phi_0(0; -; q, q) & = \sum_{n=0}^{\infty} 
     \frac{q^n}{\QPochhammer{q}{q}{n}} = 
     \frac{1}{\QPochhammer{q}{q}{\infty}} \\ 
\tag{Another $q$-Binomial Theorem} 
_1\phi_0(0; -; q, q) & = \QPochhammer{z q^{-n}}{q}{n}. 
\end{align*} 
For the first sequence case in the table, we can extend the methods of proof 
given in this article for the square series generating functions to 
find that 
\begin{align*} 
\sum_{n=0}^{\infty} \QPochhammer{a}{q}{n} z^n & = 
     \sum_{i=1}^{\infty} \frac{a^{i-1} q^{(i-1)(i-2)} 
     \QPochhammer{a}{q}{i-1} \QPochhammer{q}{q}{i-1} z^{2i-2}}{ 
     \sum\limits_{0 \leq j \leq n < 2i} 
     \QBinomial{i}{j}{q} \QBinomial{i-1}{n-j}{q} 
     \QPochhammer{a q^{i-j}}{q}{j} \QPochhammer{a q^{i-1-n+j}}{q}{n-j} 
     q^{\binom{j}{2}+\binom{n-j}{2}} (-z)^n}. 
\end{align*} 
Other examples of new infinite series and 
q-series expansions for the generating functions of the sequences listed 
in Table \ref{table_Conclusions_SpecialJ-Fraction_ConjSubSeq_Results}, 
along with many other special cases not considered within the 
context of the applications cited in this article, are proved similarly. 

\renewcommand{\refname}{References}

\end{document}